\theoremstyle{plain}
\newtheorem{thm}{Theorem}[section]
\newtheorem{remark}[thm]{Remark}
\newtheorem{pro}[thm]{Proposition}
\newtheorem{eg}[thm]{Example}
\newtheorem{cor}[thm]{Corollary}
\def\P{\mathds{P}}
\def\O{\mathcal{O}}
\def\E{\mathcal{E}}
\def\F{\mathcal{F}}
\def\G{\mathcal{G}}
\def\Z{\mathbb{Z}}
\def\dim{\mathrm{dim}}
\begin{document}

\title{Ulrich Bundles on Projective Spaces}
\author{Zhiming Lin}

\address{Peking University\\Beijing\\P. R. China}
\email{zmlin@pku.edu.cn}

\subjclass[2010]{Primary 14J60}

\keywords{Ulrich bundles, projective spaces, Veronese embedding.}


\maketitle


\begin{abstract}
We assume that $\E$ is a rank $r$ Ulrich bundle for $(\P^n, \O(d))$. The main result of this paper is that $\E(i)\otimes \Omega^{j}(j)$ has natural cohomology for any integers $i \in \Z$ and $0 \leq j \leq n$, and every Ulrich bundle $\E$ has a resolution in terms of $n$ of the trivial bundle over $\P^n$. As a corollary, we can give a necessary and sufficient condition for Ulrich bundles if $n \leq 3$, which can be used to find some new examples, i.e., rank $2$ bundles for $(\P^3, \O(2))$ and rank $3$ bundles for $(\P^2, \O(3))$.
\end{abstract}

\allowdisplaybreaks

\section{Introduction}

Let $\E$ be a vector bundle and $H$ be a very ample line bundle on a variety $X$. We say $\E$ is an Ulrich bundle for $(X,H)$, if $h^{q}(X,\E(-pH))=0$ for each $q \in \Z$ and $1\leq p \leq \dim(X)$. This notion was introduced in \cite{Eisenbud-Schreyer-Weyman-03}, where various other characterizations are given, i.e., an Ulrich bundle implies a linear maximal Cohen-Macaulay module. Ulrich asked in \cite{Ulrich-84} whether every local Cohen-Macaulay ring admits a linear MCM-module. Correspondingly, it was conjectured in \cite{Eisenbud-Schreyer-Weyman-03} that whether there exists  an Ulrich bundle on any variety for any embedding, and what is the smallest possible rank for such Ulrich bundles.

This conjecture is a wide open problem, and we know a few scattered results at present: \cite{Herzog-Ulrich-Backelin-91} for hypersurfaces and complete intersections, \cite{Eisenbud-Schreyer-Weyman-03} for curves and del Pezzo surfaces, \cite{Beauville-15.12} for abelian surfaces, \cite{Aprodu-Costa-Miro Roig-16} for geometrically ruled surfaces of invariant $e>0$, \cite{Aprodu-Farkas-Ortega-12} for sufficiently general K3 surfaces, \cite{Beauville-16.10} for bielliptic surfaces, \cite{Beauville-16.07} and \cite{Casnati-16.09} for non-special surfaces with $p_g=q=0$ (i.e. Enriques surfaces).

In this paper, we will work on an algebraically closed field $k$ of characteristic $0$ and $\P^n$ will denote the projective space over $k$ of dimension $n$. We consider the Ulrich bundles on $\P^n$ for the Veronese embedding $\O(d)$.

In \cite{Beauville-16.10}, Beauville proved that for any positive integers $n$ and $d$, there is an Ulrich bundle $\E$ of rank $n!$ for $(\P^n, \O(d))$, using the quotient map $\pi : (\P^1)^{n} \rightarrow \mathrm{Sym}^{n}\P^1 =\P^n$ of degree $n!$. So the existence of Ulrich bundles for $(\P^n, \O(d))$ has been solved completely. But as $n$ increases, $n!$ grows very fast. So we want to know what is the smallest possible rank for such bundles. In Section 3, we will prove that there is a rank $1$ Ulrich bundle $\E$ for $(\P^n,\O(d))$ if and only if $d=1, \E \simeq \O_{\P^n}$ or $n=1, \E\simeq \O(d-1)$.

In \cite{Eisenbud-Schreyer-Weyman-03}, Eisenbud, Schreyer and Weyman proved that if $\E$ is an Ulrich bundle for $(\P^n, \O(d))$, then $\E(i)$ has natural cohomology for any integer $i$. In Section 4, we will generalize this and prove the following result:

\bigskip
\noindent \textbf{Theorem \ref{pro-space-Lin-cohomology}.}\;\;If $\E$ be an Ulrich bundle for $(\P^n,\O(d))$ and $1\leq j \leq n$, then
\begin{equation*}
  h^q(\E(i)\otimes \Omega^j(j)) \neq 0 \Rightarrow
  \begin{cases}
    q=0\text{ and }-d < i, &  \\
    0 < q < n\text{ and }-(q + 1)d < i \leq -qd, &  \\
    q=n\text{ and }i \leq -nd. &
  \end{cases}
\end{equation*}
In particular, $\E(i)\otimes \Omega^j(j)$ has natural cohomology. Thus all the $h^q(\E(i)\otimes \Omega^j(j))$ are determined by the formula
$$\chi (\E(i)\otimes \Omega^j(j)) = rd^n\sum_{k=0}^{j}(-1)^{j-k}\binom{n+1}{k}\binom{\frac{i+j-k}{d}+n}{n}.$$

\bigskip
Based on this, we prove that every Ulrich bundle has a resolution in terms of $n$ of the trivial bundle, which is the generalization of Corollary 4.3 in \cite{Coskun-Genc-16}:

\bigskip
\noindent \textbf{Theorem \ref{pro-space-Lin-sequence}.}\;\;If $\E$ is an Ulrich bundle of rank $r$ for $(\P^n, \O(d))$, then we have an exact sequence
\begin{equation*}
    0 \rightarrow \O^{a_n}(-n)\rightarrow \cdots \rightarrow \O^{a_2}(-2)\rightarrow \O^{a_1}(-1)\rightarrow \E(-d) \rightarrow 0,
\end{equation*}
where
$$a_j=-rd^n\sum_{k=0}^{j}(-1)^{j-k}\binom{n+1}{k}\binom{\frac{j-k}{d}+n-1}{n}.$$

\bigskip
As a corollary, we can give a necessary and sufficient condition for Ulrich bundles if $n \leq 3$, and reclassify all Ulrich bundles for $(\P^n,\O(1))$ and $(\P^1,\O(d))$. In Section 5, we will construct some Ulrich bundles and prove some properties using the exact sequence, i.e., new examples of rank $2$ for $(\P^3, \O(2))$ and of rank $3$ for $(\P^2, \O(3))$.

\section{Preliminaries}

For convenience, we recall two formulas to calculate the cohomology of bundles on $\P^n$.

\begin{thm}
(Bott formula) The values of $h^{q}(\mathds{P}^{n},\Omega^{p}_{\mathds{P}^{n}}(d))$ are given by
\begin{equation*}
  h^{q}(\mathds{P}^{n},\Omega^{p}_{\mathds{P}^{n}}(d))=
  \begin{cases}
    \binom{d+n-p}{d}\binom{d-1}{p} & \mbox{if } q=0,0\leq p\leq n, d>p \\
    1 & \mbox{if } d=0,0\leq p=q\leq n \\
    \binom{-d+p}{-d}\binom{-d-1}{n-p} & \mbox{if } q=n,0\leq p\leq n, d< p-n \\
    0 & \mbox{otherwise}
  \end{cases}
\end{equation*}
\end{thm}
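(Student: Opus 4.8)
The plan is to prove Bott's formula by induction on $p$, using the cohomology of line bundles (the case $p=0$, where $\Omega^0 = \O$) as the base case. The engine of the induction is the Euler sequence $0 \to \Omega^1 \to \O(-1)^{\oplus(n+1)} \to \O \to 0$ on $\P^n$: since its quotient is a line bundle, the $p$-th exterior power of the middle term carries a two-step filtration, giving the short exact sequence
\[
0 \to \Omega^p \to \O(-p)^{\oplus\binom{n+1}{p}} \to \Omega^{p-1} \to 0 .
\]
Twisting by $\O(d)$ and passing to the long exact sequence in cohomology relates $h^q(\Omega^p(d))$ to the known values $h^q(\O(d-p))$ and to the cohomology of $\Omega^{p-1}(d)$, which is controlled by the inductive hypothesis. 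I would also record Serre duality in the form $h^q(\Omega^p(d)) = h^{n-q}(\Omega^{n-p}(-d))$, which follows from $(\Omega^p)^\vee \cong \Omega^{n-p}(n+1)$ and $\omega_{\P^n} = \O(-n-1)$; this reduces the top case $q=n$ to the global-sections case $q=0$ and roughly halves the work.

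First I would settle the intermediate range $0 < q < n$. Here the middle term $h^q(\O(d-p))$ vanishes, so for $q$ away from the edges the long exact sequence collapses to isomorphisms $H^{q-1}(\Omega^{p-1}(d)) \cong H^q(\Omega^p(d))$; feeding in the inductive hypothesis shows that the only surviving intermediate group is the diagonal value $h^p(\Omega^p)=1$ at $d=0$, which propagates through the chain of isomorphisms. Once all higher cohomology is pinned down, the single nonzero group in each region is forced to equal $\pm\chi(\Omega^p(d))$, and I would compute $\chi$ directly from the additive recursion $\chi(\Omega^p(d)) = \binom{n+1}{p}\binom{d-p+n}{n} - \chi(\Omega^{p-1}(d))$, whose solution is $\sum_{k=0}^{p}(-1)^{p-k}\binom{n+1}{k}\binom{d-k+n}{n}$.

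The hard part will be the boundary rows $q=1$ (and, dually, $q=n-1$). In the range $d > p$ the map $H^0(\O(d-p))^{\oplus\binom{n+1}{p}} \to H^0(\Omega^{p-1}(d))$ is genuinely nonzero, so the collapsing argument fails and one must prove this map is surjective in order to conclude $h^1(\Omega^p(d))=0$. The cleanest way I see to do this is to globalise the twisted Euler sequences into the Koszul complex on the regular sequence $x_0,\dots,x_n$ in $k[x_0,\dots,x_n]$: the exactness of that complex computes $\bigoplus_d H^0(\P^n,\Omega^\bullet(d))$ and yields the required surjectivity in the stated range, so that $h^0(\Omega^p(d))$ equals the dimension of the kernel of this map and the higher groups vanish. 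It then remains to verify the combinatorial identity
\[
\sum_{k=0}^{p}(-1)^{p-k}\binom{n+1}{k}\binom{d-k+n}{n} = \binom{d+n-p}{d}\binom{d-1}{p},
\]
together with the transitional behaviour at $d=0$ and $d=p$ where the binomial factors change sign; I expect to handle this by induction on $p$ via Pascal's rule or by a short generating-function computation.
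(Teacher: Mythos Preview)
The paper does not prove Bott's formula at all: its ``proof'' is a one-line citation of Okonek--Schneider--Spindler, Chapter~1, \S1.1. Your proposal therefore goes well beyond what the paper does.

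The outline you give is one of the standard routes to Bott's formula and is sound. Induction on $p$ via the exterior-power Euler sequences $0 \to \Omega^p \to \O(-p)^{\oplus\binom{n+1}{p}} \to \Omega^{p-1} \to 0$, combined with Serre duality $(\Omega^p)^\vee \cong \Omega^{n-p}(n+1)$ to reduce $q=n$ to $q=0$, handles the interior and the top row. You have correctly isolated the one place where the long exact sequence does not simply collapse: the surjectivity of $H^0\bigl(\O(d-p)\bigr)^{\oplus\binom{n+1}{p}} \to H^0\bigl(\Omega^{p-1}(d)\bigr)$ for $d>p$, needed to kill $H^1(\Omega^p(d))$. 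Identifying $\bigoplus_d H^0(\Omega^p(d))$ with the $p$-th syzygy module of the Koszul complex on $x_0,\dots,x_n$ is exactly the right fix, since that complex is exact in positive degrees. The closed form for $\chi(\Omega^p(d))$ and the stated binomial identity then finish the job; the identity is a routine induction using Pascal's rule, as you anticipate.

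So: nothing to correct, but be aware that for the purposes of this paper no argument is expected---the result is simply quoted.
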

\begin{proof}
See Okonek, Schneider and Spindler's book \cite{Okonek-Schneider-Spindler-88}, Chapter 1 Section 1.1.
\end{proof}

\begin{thm}\label{pro-space-Eisenbud}
Let $\E$ be a rank $r$ vector bundle on $\P^n$. Then $\E$ is an Ulrich bundle for $(\P^n,\O(d))$ if and only if
\begin{equation*}
  h^q(\E(p)) \neq 0 \Leftrightarrow
  \begin{cases}
    q=0\text{ and }-d < p, &  \\
    0 < q < n\text{ and }-(q + 1)d < p < -qd, &  \\
    q=n\text{ and }p < -nd. &
  \end{cases}
\end{equation*}
Thus $\E(p)$ has natural cohomology and all $h^q(\E(p))$ are determined by $\chi (\E(p)) = rd^n \binom{\frac{p}{d}+n}{n}$.
\end{thm}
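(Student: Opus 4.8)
The plan is to dispose of the reverse implication by inspection and to prove the forward implication by separating the numerical content (the Euler characteristic) from the vanishing content (natural cohomology). For the reverse implication: assume the displayed equivalence and set $p=-md$ with $1\le m\le n$. The first alternative would force $m<1$, the second would force the integer $m$ to lie strictly between $q$ and $q+1$, and the third would force $m>n$; all three are impossible, so $h^q(\E(-md))=0$ for every $q$, which is exactly the Ulrich condition. Hence the real content is the forward implication.

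For the Euler characteristic I would invoke Hirzebruch--Riemann--Roch: $p\mapsto\chi(\E(p))$ is a polynomial of degree $n$ in $p$ with leading coefficient $r/n!$. The Ulrich hypothesis makes $-d,-2d,\dots,-nd$ roots of this polynomial, and $n$ roots together with the leading coefficient determine it, giving $\chi(\E(p))=\tfrac{r}{n!}\prod_{m=1}^{n}(p+md)=rd^{n}\binom{p/d+n}{n}$. This product is strictly positive for $p>-d$, has sign $(-1)^q$ on the open interval $-(q+1)d<p<-qd$ for $1\le q\le n-1$, and has sign $(-1)^n$ for $p<-nd$; in particular it is nonzero there. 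Consequently, once I know that $\E(p)$ has natural cohomology — at most one $h^q(\E(p))$ nonzero for each $p$ — the sign pattern forces that cohomology to sit in precisely the predicted degree and equal $|\chi(\E(p))|$ there, and conversely forces nonvanishing throughout each window. So everything reduces to proving natural cohomology and the vanishing outside the windows.

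The two extreme degrees are elementary and I would handle them first. If $0\ne s\in H^0(\E(p))$ with $p\le -d$, then $s$ times a general section of $\O(-d-p)$ is a nonzero section of $\E(-d)$ (multiplication by a nonzero form is injective on a locally free sheaf), contradicting the Ulrich condition; hence $h^0(\E(p))=0$ for $p\le -d$. Dually, Serre duality gives $h^n(\E(p))=h^0(\E^{\vee}(-n-1-p))$, and starting from $h^n(\E(-nd))=0$, i.e.\ $h^0(\E^{\vee}(nd-n-1))=0$, the same multiplication argument applied to $\E^{\vee}$ yields $h^0(\E^{\vee}(t))=0$ for all $t\le nd-n-1$, that is, $h^n(\E(p))=0$ for all $p\ge -nd$.

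This leaves the intermediate cohomology ($0<q<n$): $h^q(\E(p))=0$ for $p\ge -qd$ or $p\le -(q+1)d$, equivalently natural cohomology. I would set this up with two reductions. First, the ``Ulrich dual'' $\E^{\vee}((n+1)(d-1))$ is again an Ulrich bundle for $(\P^n,\O(d))$ (check directly that its vanishing twists are $-d,\dots,-nd$), and Serre duality matches the $q$-th cohomology of $\E$ in degree $p$ with the $(n-q)$-th cohomology of this dual in degree $-(p+(n+1)d)$; so it is enough to prove the ``high side'' $h^q(\E(p))=0$ for $p\ge -qd$. Second, composing the $d$-th Veronese embedding $v_d\colon\P^n\hookrightarrow\P^N$ with a general linear projection produces a finite flat morphism $\phi\colon\P^n\to\P^n$ of degree $d^n$ with $\phi^{*}\O(1)=\O(d)$; then $\phi_{*}\E$ is a vector bundle of rank $rd^n$, the Ulrich condition for $\E$ is exactly the statement that $\phi_{*}\E$ is Ulrich for $(\P^n,\O(1))$, and a short induction on $n$ (now restricting to hyperplanes, which \emph{are} hyperplane sections in the $\O(1)$-embedding) forces $\phi_{*}\E\cong\O_{\P^n}^{\oplus rd^n}$. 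Via the projection formula this computes $h^q(\E(td))$ for every $t\in\Z$, proving the assertion on all twists divisible by $d$. For the remaining twists I would run the Beilinson spectral sequence — equivalently, analyse the Tate resolution of $\E$ through the BGG correspondence — whose $E_1$-page involves the sheaves $H^b(\E(p)\otimes\Omega^a(a))\otimes\O(-a)$, $0\le a\le n$; combining the Bott formula with the vanishings already in hand should let me show, by induction on the cohomological degree, that the spectral sequence can converge to a sheaf concentrated in degree $0$ only if each $\E(p)$ has natural cohomology in the degree dictated by the sign of $\chi(\E(p))$. I expect this last step — the bookkeeping of which $E_1$-terms and differentials survive — to be the main obstacle; it is precisely the engine that Theorem~\ref{pro-space-Lin-cohomology} later refines and runs simultaneously for all the twisted bundles $\E(i)\otimes\Omega^j(j)$.
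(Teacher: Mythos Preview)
The paper does not prove this statement at all: its entire proof is the one-line citation ``See Eisenbud, Schreyer and Weyman's paper \cite{Eisenbud-Schreyer-Weyman-03}, Theorem~5.1.'' The result is imported as a black box and then used as the standing input to Section~4 (in particular to Theorem~\ref{pro-space-Lin-cohomology} and Corollary~\ref{pro-space-Eisenbud-cor-2}). So there is no in-paper argument to compare your proposal against.

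On the merits of your outline: the reverse implication, the Hirzebruch--Riemann--Roch determination of $\chi(\E(p))=rd^n\binom{p/d+n}{n}$ from its $n$ forced roots, the $h^0$ and $h^n$ vanishings via multiplication by forms and Serre duality, the Ulrich-dual symmetry $\E^{\vee}((n+1)(d-1))$, and the finite-cover identification $\phi_*\E\cong\O_{\P^n}^{\oplus rd^n}$ handling all twists divisible by $d$ are all correct and are indeed how \cite{Eisenbud-Schreyer-Weyman-03} sets the problem up. The genuine gap is precisely where you place it: the Beilinson/Tate step for intermediate cohomology at twists $p\not\equiv 0\pmod d$ is only gestured at. Note a structural hazard here: the $E_1$-page of Beilinson for $\E(p)$ is built from the groups $H^q(\E(p)\otimes\Omega^j(j))$, which are exactly what Theorem~\ref{pro-space-Lin-cohomology} computes \emph{assuming} the present theorem; within this paper's logic you cannot lean on that later result, and the exact sequences $(\Lambda_{i,j})$ shift the twist by $1$ rather than by $d$, so the ``vanishings already in hand'' (which, for $0<q<n$, live only at multiples of $d$) do not obviously bootstrap. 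The argument in \cite{Eisenbud-Schreyer-Weyman-03} closes this by working on the exterior-algebra side: the Ulrich hypothesis pins down the shape of the Tate resolution, from which natural cohomology in every twist is read off simultaneously. Your plan is pointed in the right direction, but that mechanism still has to be supplied before it is a proof.
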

\begin{proof}
See Eisenbud, Schreyer and Weyman's paper \cite{Eisenbud-Schreyer-Weyman-03}, Theorem 5.1.
\end{proof}

Here, we say a vector bundle $\F$ on $\P^n$ has natural cohomology if for each $p\in \Z$, at most one of the cohomology groups $H^q(\F(p)) \neq 0$ for $0 \leq q \leq n$. For any real number $x$ and positive integers $n$ and $m$, we use the formal operation
\begin{equation*}
  \binom{x+m}{n}\doteq \frac{(x+m)(x+m-1)(x+m-2)\cdots (x+m+1-n)}{n(n-1)(n-2)\cdots 1}.
\end{equation*}

\begin{figure}[htbp]
  \centering
  \includegraphics[width=0.9\textwidth]{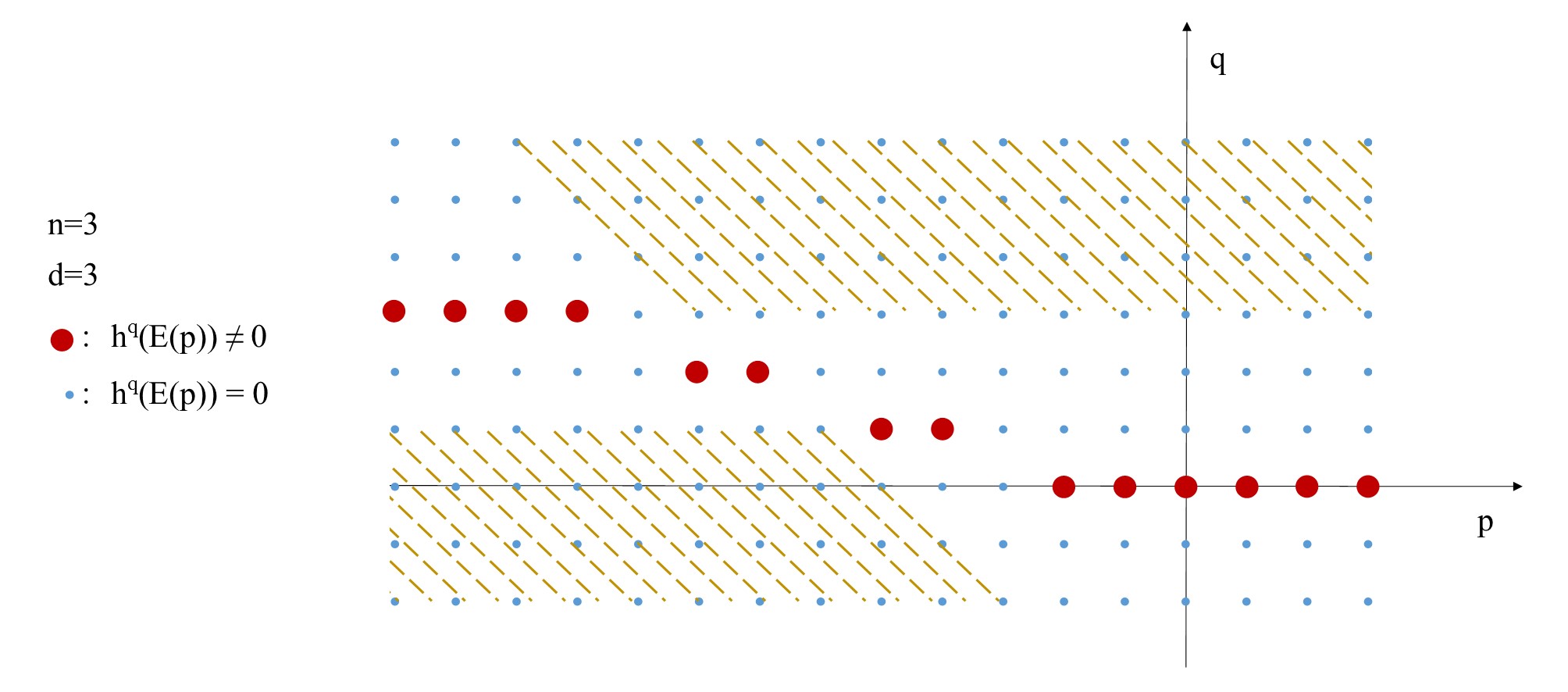}
\end{figure}

For example, if we assume that $\E$ is an Ulrich bundle for $(\P^3,\O(3))$, we have the diagrammatic drawing of the cohomologies as above. Since $h^2(\E(-7))\neq 0$, we can get $h^q(\E(i))=0$ for $q\leq 1, q+i\leq -5$ and $q\geq 3, q+i \geq -5$. In general, we have the following useful tip

\begin{cor}\label{pro-space-Eisenbud-cor-2}
Let $\E$ be an Ulrich bundle for $(\P^n,\O(d))$. If $h^{q_0}(\E(i_0))\neq 0$, then $h^q(\E(i))= 0$ for $q \leq q_0-1, q+i \leq q_0+i_0$ and $q \geq q_0+1, q+i \geq q_0+i_0$.
\end{cor}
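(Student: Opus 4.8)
The plan is to read everything off the numerical cohomology table of an Ulrich bundle provided by Theorem~\ref{pro-space-Eisenbud}; no input beyond that statement is needed. First I would isolate two one-sided consequences of it: whenever $h^q(\E(i))\neq 0$, if $q\leq n-1$ then $i>-(q+1)d$, hence $i\geq-(q+1)d+1$; and if $q\geq 1$ then $i<-qd$, hence $i\leq-qd-1$. Both follow at once by going through the cases $q=0$, $0<q<n$, $q=n$ in the hypothesis of Theorem~\ref{pro-space-Eisenbud}.

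The core step is a symmetric estimate: if $h^{q_0}(\E(i_0))\neq 0$, $h^q(\E(i))\neq 0$ and $q\leq q_0-1$, then $q+i\geq q_0+i_0+1$. Indeed $q\leq q_0-1$ forces $q\leq n-1$ and $q_0\geq 1$, so the two bounds above both apply and yield $q+i\geq q-(q+1)d+1$ together with $q_0+i_0\leq q_0-q_0d-1$; subtracting,
\[
(q+i)-(q_0+i_0)\;\geq\;(q_0-q)(d-1)-(d-2)\;\geq\;(d-1)-(d-2)\;=\;1,
\]
where the last inequality uses $q_0-q\geq 1$ and $d-1\geq 0$. This one line carries the whole argument; the only point needing a little care is that it stays valid when $d=1$, where $d-1=0$.

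Finally I would assemble the corollary from this estimate. Assume $h^{q_0}(\E(i_0))\neq 0$. If $q\leq q_0-1$ and $h^q(\E(i))\neq 0$, the estimate gives $q+i\geq q_0+i_0+1>q_0+i_0$; hence $h^q(\E(i))=0$ whenever $q\leq q_0-1$ and $q+i\leq q_0+i_0$. For the complementary region one applies the same estimate with the pairs $(q_0,i_0)$ and $(q,i)$ interchanged: if $q\geq q_0+1$ and $h^q(\E(i))\neq 0$ then $q_0+i_0\geq q+i+1>q+i$, so $h^q(\E(i))=0$ whenever $q\geq q_0+1$ and $q+i\geq q_0+i_0$. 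When $q_0=0$ or $q_0=n$ one of the two regions is empty, so nothing further is needed. I do not expect any real obstacle here: the statement is essentially the remark that the ``staircase'' of nonvanishing cohomology of an Ulrich bundle shifts one column per step with slope governed by $d$, and this is exactly what Theorem~\ref{pro-space-Eisenbud} encodes.
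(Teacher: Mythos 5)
Your proof is correct and follows the paper's (unwritten) route exactly: the paper simply declares the corollary an immediate consequence of Theorem~\ref{pro-space-Eisenbud}, and you carry out that verification, with the key inequality $(q_0-q)(d-1)-(d-2)\geq 1$ checking out (it equals $(q_0-q-1)(d-1)+1$, which is at least $1$ for $q_0-q\geq 1$ and $d\geq 1$). Nothing to add.
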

\begin{proof}
It's an immediate consequence of Theorem \ref{pro-space-Eisenbud}.
\end{proof}

\section{Basic Properties}

In this section, we construct all rank 1 Ulrich bundles and give a criteria of rank 2 Ulrich bundles.

\begin{pro}\label{pro-space-rank 1}
There is a rank 1 Ulrich bundle $\E$ for $(\P^n,\O(d))$ if and only if $d=1$, $\E \simeq \O_{\P^n}$ or $n=1$, $\E\simeq \O(d-1)$.
\end{pro}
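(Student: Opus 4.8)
The plan is to reduce immediately to line bundles and then run the Ulrich vanishing criterion against Bott's formula. Since $\mathrm{Pic}(\P^n)=\Z$, a rank $1$ bundle is $\E\simeq \O(a)$ for some $a\in\Z$. By Bott's formula (the case $p=0$, or the classical computation of line bundle cohomology on $\P^n$), $H^{\bullet}(\P^n,\O(m))$ vanishes entirely precisely when $-n\leq m\leq -1$. Hence $\O(a)$ is an Ulrich bundle for $(\P^n,\O(d))$ if and only if $-n\leq a-pd\leq -1$ for every $1\leq p\leq n$.

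Next I would observe that among these $n$ double inequalities only the two extremal ones are binding: if $a-d\leq -1$ then $a-pd\leq -1$ for all $p\geq 1$, and if $a-nd\geq -n$ then $a-pd\geq -n$ for all $p\leq n$. Thus the Ulrich condition is equivalent to the pair $nd-n\leq a$ and $a\leq d-1$, which in particular forces $nd-n\leq d-1$, i.e. $(n-1)(d-1)\leq 0$. Since $n\geq 1$ and $d\geq 1$, this can only happen when $n=1$ or $d=1$.

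It then remains to treat the two cases. If $d=1$, the inequalities $nd-n\leq a\leq d-1$ read $0\leq a\leq 0$, so $a=0$ and $\E\simeq \O_{\P^n}$; conversely $\O_{\P^n}$ visibly satisfies $-n\leq -p\leq -1$ for $1\leq p\leq n$. If $n=1$, the only constraint is the one for $p=1$, namely $a-d=-1$, giving $\E\simeq \O(d-1)$, which conversely is Ulrich on $\P^1$. The overlap $n=d=1$ is consistent, since there $\O(d-1)=\O_{\P^1}$.

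I do not expect a genuine obstacle: the one point requiring a moment's care is the reduction to the two extremal values of $p$, which is exactly what makes the numerical inequality $(n-1)(d-1)\leq 0$, and hence the claimed dichotomy, fall out cleanly.
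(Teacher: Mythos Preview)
Your argument is correct, and it is genuinely different from---and more elementary than---the paper's proof. The paper treats $n\geq 2$ by equating the two expressions for the Hilbert polynomial $\chi(\O(a+dt))$ (one from Theorem \ref{pro-space-Eisenbud}, one from Bott), extracts the relations $(n+1)(d-1)=2a$ and $d^n=\binom{a+n}{n}$, and then rules out $d>1$ by a calculus argument showing that $f(d)=\ln\binom{a+n}{n}-n\ln d$ is strictly increasing. Your route bypasses all of this: you use only the elementary fact that $H^{\bullet}(\P^n,\O(m))$ vanishes exactly when $-n\leq m\leq -1$, reduce the $n$ vanishing conditions to the two extremal ones $p=1$ and $p=n$, and obtain the clean numerical constraint $(n-1)(d-1)\leq 0$. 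This is shorter, avoids any analytic step, and makes the dichotomy $n=1$ or $d=1$ appear transparently. The paper's method, on the other hand, illustrates a technique (matching Hilbert polynomial coefficients) that it reuses later, e.g.\ in Proposition \ref{pro-space-rank 2} and the remark following it, to compute Chern classes of higher-rank Ulrich bundles---so while less efficient here, it foreshadows tools needed elsewhere.
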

\begin{proof}
Let $\E\doteq \O(a)$ be a rank 1 Ulrich bundle for $(\P^n,\O(d))$. If $n=1$, by definition we have $h^0(\P^1,\E(-d))=h^1(\P^1,\E(-d))=0$, which implies that $\E(-d) \simeq \O(-1)$.  If $n\geq 2$, there are two ways to calculate the Hilbert polynomial
\begin{equation*}
  \begin{cases}
    \chi(\O(a+dt))=d^n \binom{t+n}{n} & \text{by Theorem \ref{pro-space-Eisenbud}} \\
    \chi(\O(a+dt))=\binom{a+dt+n}{n} & \text{by Bott formula}
  \end{cases}
\end{equation*}
Comparing the coefficients of $t^{n-1}$ and $t^0$, we obtain the equations
\begin{equation}
  \begin{cases}
    (n+1)(d-1)=2a &  \\
    d^n=\binom{a+n}{n} &
  \end{cases}
\end{equation}
If we define
\begin{equation*}
    f(d)\doteq ln \binom{a+n}{n}-ln(d^n)
    =\sum_{i=1}^{n}ln[\frac{(n+1)(d-1)}{2}+i]-\sum_{i=1}^{n}lni-nlnd\;\;(d\geq1),
\end{equation*}
then the derivative of $f(d)$ is
\begin{equation*}
    f'(d)=\sum_{i=1}^{n}[\frac{1}{d-1+\frac{2i}{n+1}}-\frac{1}{d}]= \sum_{i=1}^{n}\frac{1}{2}[\frac{1}{d-1+\frac{2i}{n+1}}+\frac{1}{d-1+\frac{2(n+1-i)}{n+1}}-\frac{2}{d}]
    >0.
\end{equation*}
So we have $f(1)=0$ and $f'(d)>0$ for all $d\geq 1$, which implies that $f(d)=0$ if and only if $d=1$. Solving the equations (1), we obtain $d = 1$ and $a = 0$.

The converse is trivial by Bott formula.
\end{proof}

\begin{remark}
Coskun and Genc have proved the case $n=2$ in \cite{Coskun-Genc-16} Proposition 3.1. Besides, in \cite{Beauville-16.10} Beauville claimed that there is a rank 1 Ulrich bundle for $(X,H)$ with $\mathrm{Pic}(X)= \Z \O_X(H)$ if and only if $X \simeq \P^{n}$ for some $n \in \mathds{N}_{+}$, which is different from Proposition \ref{pro-space-rank 1}.
\end{remark}

\begin{pro}\label{pro-space-rank 2}
The rank 2 vector bundle $\E$ is an Ulrich bundle for $(\P^n,\O(d))$, if and only if it satisfies the following two conditions:
\begin{enumerate}[1)]
  \item $c_1(\E)=(n+1)(d-1)$;
  \item If $n=2m$ is even, then $h^q(\E(-pd))=0(\forall q \in \Z,1\leq p \leq m)$; If $n=2m+1$ is odd, then $h^q(\E(-pd))=0(\forall q \in \Z,1\leq p \leq m)$ and $h^q(\E(-(m+1)d))=0(0\leq q \leq m)$.
\end{enumerate}
\end{pro}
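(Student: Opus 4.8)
The plan is to show that the two stated conditions are equivalent to the cohomological vanishing characterization of Theorem \ref{pro-space-Eisenbud} applied to a rank $2$ bundle. The forward direction is the easy one: if $\E$ is Ulrich, then condition 2) is immediate from the definition of an Ulrich bundle (indeed the definition already demands $h^q(\E(-pH))=0$ for all $q$ and $1\le p\le n$, which is stronger than what is listed), and condition 1) follows by comparing the two expressions for the Hilbert polynomial $\chi(\E(i))$ exactly as in the proof of Proposition \ref{pro-space-rank 1}: Theorem \ref{pro-space-Eisenbud} gives $\chi(\E(dt)) = 2d^n\binom{t+n}{n}$, while Riemann--Roch on $\P^n$ gives a polynomial in $t$ whose next-to-leading coefficient encodes $c_1(\E)$; matching the coefficient of $t^{n-1}$ yields $c_1(\E) = (n+1)(d-1)$.

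The substance is the converse: assuming 1) and 2), I would reconstruct the full vanishing pattern of Theorem \ref{pro-space-Eisenbud} from the partial vanishing in 2) together with Serre duality. First, Serre duality on $\P^n$ gives $h^q(\E(i)) = h^{n-q}(\E^\vee(-i)\otimes\omega_{\P^n}) = h^{n-q}(\E^\vee(-i-n-1))$. For a rank $2$ bundle one has $\E^\vee \simeq \E(-c_1(\E))$, so $h^q(\E(i)) = h^{n-q}(\E(-i-n-1-c_1(\E)))$. Substituting $c_1(\E) = (n+1)(d-1)$ from condition 1) turns this into a self-duality of the cohomology table: $h^q(\E(i)) = h^{n-q}(\E(-i-(n+1)d))$. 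In particular, with $i = -pd$ this reads $h^q(\E(-pd)) = h^{n-q}(\E(-(n+1-p)d))$. Hence the vanishing $h^q(\E(-pd))=0$ for $1\le p\le m$ (all $q$) propagates by duality to $h^q(\E(-pd))=0$ for $m+1 \le p \le n$ when $n=2m$, giving vanishing for the whole range $1\le p\le n$; when $n=2m+1$, the extra partial vanishing $h^q(\E(-(m+1)d))=0$ for $0\le q\le m$ together with its dual $h^q(\E(-(m+1)d))=0$ for $m+1\le q\le n$ (which is the range $n-q \le m$, i.e. the dual indices) fills in the self-dual middle slice $p=m+1$, and again duality covers $m+2\le p\le n$. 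Either way we recover $h^q(\E(-pd))=0$ for all $q$ and all $1\le p\le n$, which is precisely the definition of an Ulrich bundle.

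The one point that needs care — and what I expect to be the main obstacle — is making sure the counting in condition 2) is exactly the minimal self-dual set of vanishings, i.e. that no redundancy or gap appears at the boundary, particularly the parity bookkeeping in the odd case where the slice $p=m+1$ is its own dual and only ``half'' of it ($0\le q\le m$) is hypothesized, the other half being forced. I would write out the index ranges carefully: for $n=2m+1$, $h^q(\E(-(m+1)d))$ with $q\le m$ is dual to $h^{n-q}(\E(-(m+1)d))$ with $n-q \ge m+1$, and since $n-q$ ranges over $\{m+1,\dots,n\}$ as $q$ ranges over $\{0,\dots,m\}$, the stated half plus its dual is all of $q\in\{0,\dots,n\}$, with no overlap because $n$ is odd so $q=n-q$ is impossible. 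With that verified, the equivalence with Theorem \ref{pro-space-Eisenbud} is complete, since that theorem's conditions are themselves just ``$h^q(\E(-pd))=0$ for $1\le p\le n$'' once one observes (again via the natural-cohomology/Euler-characteristic argument) that this vanishing forces the precise nonvanishing pattern elsewhere.
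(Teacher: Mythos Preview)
Your proposal is correct and follows essentially the same approach as the paper: the forward direction uses the Hilbert-polynomial comparison to get $c_1(\E)=(n+1)(d-1)$, and the converse hinges on the rank-$2$ self-duality $\E\simeq\E^{\vee}(c_1(\E))$ combined with Serre duality to obtain $h^q(\E(-pd))=h^{n-q}(\E(-(n+1-p)d))$, from which the full Ulrich vanishing is reconstructed. The paper's proof is terser (it stops at the duality identity), while you spell out the even/odd bookkeeping explicitly; your final remark about matching the full pattern of Theorem~\ref{pro-space-Eisenbud} is unnecessary, since recovering $h^q(\E(-pd))=0$ for all $q$ and $1\le p\le n$ is already the definition of an Ulrich bundle.
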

\begin{proof}
let $\E$ be a rank 2 Ulrich bundle for $(\P^n,\O(d))$. If we assume that the topological equivalence class of $\E$ is $\O(a)\oplus \O(b)$, then $c_1(\E)=a+b$. There are two ways to calculate the Hilbert polynomial
\begin{equation*}
  \begin{cases}
    \chi(\E(dt))=2d^n \binom{t+n}{n} & \text{by Theorem \ref{pro-space-Eisenbud}} \\
    \chi(\O(a+dt)\oplus \O(b+dt))=\binom{a+dt+n}{n}+\binom{b+dt+n}{n} & \text{by Bott formula}
  \end{cases}
\end{equation*}
Comparing the coefficients of $t^{n-1}$, we obtain $c_1(\E)=(n+1)(d-1)$. Condition 2) is trivial by the definition of Ulrich bundles.

Now we prove the converse. Since $\mathrm{rank}(\E)=2$, we have $\E\simeq \E^{\vee}(\det\E)$, which implies that $\E\simeq \E^{\vee}((n+1)(d-1))$ by condition 1). So we get $h^q(\E(-pd))=h^{n-q}(\E(-(n+1-p)d))$ by Serre duality.
\end{proof}

\begin{remark}
Actually, if we compare the coefficients of $t^{i}$ for all $i$, we can calculate all chern classes of $\E$ of any rank $r$. For example, if $r=2$, we have $c_2(\E)=\frac{(n+1)(d-1)}{12}[(3n+4)d-(3n+2)]$. As a corollary, when $n=3, d=3k$ or $n=4, d=2k$ for any positive integer $k$, there is no Ulrich bundles of rank 2 for $(\P^n,\O(d))$, since $c_2(\E) \in \Z$. Eisenbud, Schreyer and Weyman give a more effective criteria to find the triple $(n,d,r)$ such that there is no Ulrich bundles of rank $r$ for $(\P^n,\O(d))$, see \cite{Eisenbud-Schreyer-Weyman-03} Corollary 5.3.
\end{remark}

\section{Main Results}

For convenience, we denote the sheaf of germs of holomorphic $j$-forms on $\P^n$ by $\Omega^j$. If we dualize the Euler sequence and takes the $j$th exterior power (see Hirzebruch \cite{Hirzebruch-66}, p. 55), we get the following exact sequence of vector bundles
$$ 0\rightarrow \Omega^j(j) \rightarrow \O ^{\oplus\binom{n+1}{j}}\rightarrow \Omega^{j-1}(j)\rightarrow 0.$$
Now we use the notation $\E_{i,j}\doteq \E(i)\otimes \Omega^j(j)$. When we tensor the above sequence by $\E(i)$, we have a series of exact sequences
\begin{equation*}
  (\Lambda_{i,j}):\;\;\;\;0\rightarrow \E_{i,j} \rightarrow \E(i) ^{\oplus\binom{n+1}{j}}\rightarrow \E_{i+1,j-1}\rightarrow 0.
\end{equation*}
These sequences give us a way to calculate the cohomology of $\E(i)\otimes \Omega^j(j)$ as follow.

\begin{thm}\label{pro-space-Lin-cohomology}
If $\E$ be an Ulrich bundle for $(\P^n,\O(d))$ and $1\leq j \leq n$, then
\begin{equation*}
  h^q(\E(i)\otimes \Omega^j(j)) \neq 0 \Rightarrow
  \begin{cases}
    q=0\text{ and }-d < i, &  \\
    0 < q < n\text{ and }-(q + 1)d < i \leq -qd, &  \\
    q=n\text{ and }i \leq -nd. &
  \end{cases}
\end{equation*}
In particular, $\E(i)\otimes \Omega^j(j)$ has natural cohomology. Thus all the $h^q(\E(i)\otimes \Omega^j(j))$ are determined by the formula
$$\chi (\E(i)\otimes \Omega^j(j)) = rd^n\sum_{k=0}^{j}(-1)^{j-k}\binom{n+1}{k}\binom{\frac{i+j-k}{d}+n}{n}.$$
\end{thm}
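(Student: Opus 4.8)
The plan is to deduce the stated implication from two one-sided vanishing statements, and then read off natural cohomology and the Euler characteristic. Observe first that the three regions in the claim — namely $i>-d$ for $q=0$, the half-open interval $-(q+1)d<i\le -qd$ for $0<q<n$, and $i\le -nd$ for $q=n$ — partition $\Z$ as $q$ runs from $0$ to $n$. Hence it suffices to prove: (A) for $1\le q\le n$, $h^q(\E_{i,j})\neq 0$ forces $i\le -qd$; and (B) for $0\le q\le n-1$, $h^q(\E_{i,j})\neq 0$ forces $i>-(q+1)d$. Once both hold, each fixed $i$ admits a single admissible value of $q$, so $\E_{i,j}$ has natural cohomology.

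I would prove (B) by induction on $j$. For $j=0$ we have $\E_{i,0}=\E(i)$ and (B) is part of Theorem \ref{pro-space-Eisenbud}. For the inductive step fix $q$ with $0\le q\le n-1$ and suppose $i\le -(q+1)d$; I want $h^q(\E_{i,j})=0$. The long exact sequence of $(\Lambda_{i,j})$ contains $H^{q-1}(\E_{i+1,j-1})\to H^q(\E_{i,j})\to H^q(\E(i))^{\oplus\binom{n+1}{j}}$, so it is enough to kill the two flanking terms. The right-hand term vanishes by Theorem \ref{pro-space-Eisenbud} since $q<n$ and $i\le -(q+1)d$. For the left-hand term: if $q=0$ it is zero automatically; if $q\ge 1$ then $i+1\le -(q+1)d+1\le -qd$ because $d\ge 1$, so the inductive hypothesis for $j-1$ at cohomological degree $q-1$ and twist $i+1$ applies and gives $H^{q-1}(\E_{i+1,j-1})=0$. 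This closes the induction and establishes (B) for every Ulrich bundle and every $j$.

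Statement (A) does not seem to follow from the analogous induction — for $d\ge 2$ the same inductive step fails on this side — so instead I would obtain it from (B) by Serre duality. The bundle $\F\doteq \E^{\vee}((n+1)(d-1))$ is again Ulrich for $(\P^n,\O(d))$: by Serre duality $h^q(\F(-pd))=h^{n-q}(\E(-(n+1-p)d))^{\ast}$, which vanishes for $1\le p\le n$. Using $(\Omega^j(j))^{\vee}\cong\Omega^{n-j}(n+1-j)$ (from $\wedge^j T\cong\Omega^{n-j}(n+1)$) together with $\omega_{\P^n}=\O(-n-1)$, Serre duality gives $h^q(\E_{i,j})\cong h^{n-q}\big(\F(i')\otimes\Omega^{n-j}(n-j)\big)^{\ast}$ with $i'=1-i-(n+1)d$. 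A short computation shows that ``$1\le q\le n$ and $i>-qd$'' for $\E$ is exactly ``$0\le n-q\le n-1$ and $i'\le -((n-q)+1)d$'' for $\F$, so (A) for $\E$ is precisely (B) for $\F$, already proved. Combining (A) and (B) yields the displayed non-vanishing region, hence natural cohomology.

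For the Euler-characteristic formula I would take alternating sums along the sequences $(\Lambda_{i,j})$: additivity of $\chi$ gives $\chi(\E_{i,j})=\binom{n+1}{j}\chi(\E(i))-\chi(\E_{i+1,j-1})$, and unwinding this recursion down to $j=0$ produces $\chi(\E_{i,j})=\sum_{k=0}^{j}(-1)^{j-k}\binom{n+1}{k}\chi(\E(i+j-k))$. Substituting $\chi(\E(p))=rd^n\binom{\frac{p}{d}+n}{n}$ from Theorem \ref{pro-space-Eisenbud} with $p=i+j-k$ gives the stated expression, and since $\E_{i,j}$ has natural cohomology this pins down each $h^q(\E_{i,j})$ individually. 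The only genuinely non-routine point is recognizing that the naive induction closes (B) but not (A), so the Ulrich-duality $\E\mapsto\F$ is an essential ingredient rather than a convenience; everything else is bookkeeping with the long exact sequences of $(\Lambda_{i,j})$ and Theorem \ref{pro-space-Eisenbud}.
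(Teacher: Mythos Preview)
Your argument is correct and is a genuinely different route from the paper's. The paper proceeds by a case analysis on the value of $i$ (the four cases $-(q_0+1)d<i<-q_0d$, $i>-d$, $i<-nd$, $i=-q_0d$) and, in each case, walks the sequences $(\Lambda_{i,j})$ in \emph{both} directions: decreasing $j$ via $(\Lambda_{i,j})$ to handle $q\le q_0-1$, and increasing $j$ via $(\Lambda_{i-1,j+1})$ (using $\Omega^n(n)\cong\O(-1)$) to handle $q\ge q_0+1$. No duality is invoked. By contrast, you isolate the two one-sided bounds (A) and (B), close (B) by a clean induction on $j$ walking only downward, and then obtain (A) from (B) via Serre duality and the Ulrich involution $\E\mapsto\E^{\vee}((n+1)(d-1))$. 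Your observation that the naive induction closes (B) but not (A) is exactly the obstruction the paper sidesteps by walking the other way; you sidestep it with duality instead. Your approach is shorter and makes the hidden symmetry $(q,j,i)\leftrightarrow(n-q,n-j,1-i-(n+1)d)$ explicit, at the price of importing the fact that $\E^{\vee}((n+1)(d-1))$ is again Ulrich; the paper's approach is entirely self-contained in the $(\Lambda_{i,j})$ ladder but requires separate bookkeeping at the boundary $i=-q_0d$ (and a further sub-case when $d=1$). The Euler-characteristic part is identical in both proofs.
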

\begin{proof}
  To prove the first part of this thoerem, we discuss the value of $i$.

  \bigskip

  (Case 1) If $-(q_0+1)d<i< -q_0d$, where $q_0$ is an integer with $1\leq q_0\leq n$.

   Since $h^{q_0}(\E(i))\neq0$ by Theorem \ref{pro-space-Eisenbud}, we discuss the value of $q$. For any $q \leq q_0-1$, we have $h^{q}(\E(i))=h^{q-1}(\E(i))=0$, which implies  $h^{q}(\E_{i,j})=h^{q-1}(\E_{i+1,j-1})$ by the exact sequence $(\Lambda_{i,j})$. Moreover, since $h^{\widehat{q}}(\E(\widehat{i}))=0(\widehat{q}\leq q_0-1, \widehat{q}+\widehat{i}\leq q_0+i)$ by Corollary \ref{pro-space-Eisenbud-cor-2}, we can repeat the above step and conclude that
   $$h^{q}(\E_{i,j})=h^{q-1}(\E_{i+1,j-1})=\cdots=h^{q-j}(\E_{i+j,0})=h^{q-j}(\E(i+j))=0.$$
   For any $q \geq  q_0+1$, we have $h^{q}(\E(i-1))=h^{q+1}(\E(i-1))=0$ by Corollary \ref{pro-space-Eisenbud-cor-2}, which implies $h^{q}(\E_{i,j})=h^{q+1}(\E_{i-1,j+1})$ by the exact sequence $(\Lambda_{i-1,j+1})$. Moreover, with the same reason, we have $h^{\widehat{q}}(\E(\widehat{i}))=0(\widehat{q}\geq q_0+1, \widehat{q}+\widehat{i}\geq q_0+i)$, so we can also repeat the above step and conclude that
   $$h^{q}(\E_{i,j})=\cdots=h^{q+(n-j)}(\E_{i-(n-j),n})=h^{q+(n-j)}(\E(i-(n-j)-1))=0.$$
   Hence, at this case we have $h^{q}(\E_{i,j})=0$ for $q\neq q_0$.

  \bigskip

  (Case 2) If $i >-d$.

  As in case 1, since $h^{0}(\E(i))\neq0$, we have $h^{\widehat{q}}(\E(\widehat{i}))=0(\widehat{q}\geq 1, \widehat{q}+\widehat{i}\geq i)$. Then for any $q \geq  1$, we can get $h^{q}(\E_{i,j})=h^{q+(n-j)}(\E_{i-(n-j),n})=h^{q+(n-j)}(\E(i-(n-j)-1))=0$. Hence, at this case we have $h^{q}(\E_{i,j})=0$ for $q\neq 0$.

  \bigskip

  (Case 3) If $i <-nd$.

  As in case 1, since $h^{n}(\E(i))\neq0$, we have $h^{\widehat{q}}(\E(\widehat{i}))=0(\widehat{q}\leq n-1, \widehat{q}+\widehat{i} \leq n+i)$. Then for any $q \leq  n-1$, we can get $h^{q}(\E_{i,j})=h^{q-j}(\E_{i+j,0})=h^{q-j}(\E(i+j))=0$. Hence, at this case we have $h^{q}(\E_{i,j})=0$ for $q\neq n$.

  \bigskip

  (Case 4) If $i =-q_0d$, where $q_0$ is an integer with $1\leq q_0\leq n$.

  Since $h^{q}(\E(i))=0(\forall q)$, we have $h^{q}(\E_{i,j})=h^{q-1}(\E_{i+1,j-1})(\forall q)$ by the exact sequence $(\Lambda_{i,j})$. Now we only need to calculate $h^q(\E_{i+1,j-1})(\forall q)$. If $d\neq 1$, then $-q_0d< i+1 <-(q_0-1)d$. It turns into case 1 or case 2, which we have done. If $d=1$, then $i =-q_0$. For $j \leq q_0-1$, we have $-n\leq i+j\leq -1$ and $h^{q}(\E_{i,j})=h^{q+j}(\E_{i+j,0})=h^{q+j}(\E(i+j))=0$ for any $q$. For $j \geq q_0+1$, we have $-n \leq i+j-n-1 \leq -1$ and $h^{q}(\E_{i,j})=h^{q-(n-j)}(\E_{i-(n-j),n})=h^{q-(n-j)}(\E(i+j-n-1))=0$ for any $q$. If $j = q_0$, using the same method we have $h^{q}(\E_{-q_0,q_0})\neq0$ if and only if $q=q_0$. Hence, at all of these cases we have $h^{q}(\E_{i,j})=0$ for $q\neq q_0$.

  \bigskip

  Now we prove the last part of the theorem by induction. By the exact sequence $(\Lambda_{i,j})$, we have
  $$\chi(\E_{i,j})=\binom{n+1}{j}\chi(\E(i))-\chi(\E_{i+1,j-1}).$$
  By Theorem \ref{pro-space-Eisenbud} and induction, we can get
  $$\chi(\E_{i,j})=\sum_{k=0}^{j}(-1)^{j-k}\binom{n+1}{k}\chi(\E(i+j-k))= rd^n\sum_{k=0}^{j}(-1)^{j-k}\binom{n+1}{k}\binom{\frac{i+j-k}{d}+n}{n}.$$
  So we have done.
\end{proof}

\begin{remark}\label{pro-space-Lin-cohomology-remark}
If we want to find a sufficient and necessary condition as in Theorem \ref{pro-space-Eisenbud}, we need to determinate all quintuple $(n,d,r,i,j)$ such that $\chi(\E_{i,j})= 0$, which is very difficult. But for $d=1$, we have proved that $\chi(\E_{i,j})=0$ if $-n\leq i \leq -1$ and $i+j\neq 0$.
\end{remark}

\begin{thm}\label{pro-space-Lin-sequence}
  If $\E$ is an Ulrich bundle of rank $r$ for $(\P^n, \O(d))$, then we have an exact sequence
  \begin{equation}\label{pro-space-Lin-sequence-eq}
    0 \rightarrow \O^{a_n}(-n)\rightarrow \cdots \rightarrow \O^{a_2}(-2)\rightarrow \O^{a_1}(-1)\rightarrow \E(-d) \rightarrow 0,
  \end{equation}
  where
  $$a_j=-rd^n\sum_{k=0}^{j}(-1)^{j-k}\binom{n+1}{k}\binom{\frac{j-k}{d}+n-1}{n}.$$
\end{thm}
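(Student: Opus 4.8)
The plan is to build the resolution by a standard ``Beilinson-type'' argument, using the exact sequences $(\Lambda_{i,j})$ from the previous theorem to realize $\E(-d)$ as a repeated extension by twisted trivial bundles. Concretely, I would first observe that, by Theorem~\ref{pro-space-Eisenbud}, the bundle $\F_0 \doteq \E(-d)$ satisfies $h^q(\F_0) = 0$ for $q \geq 1$ and also $h^q(\F_0(-1)) = \cdots = h^q(\F_0(-p)) = 0$ for the relevant ranges; the key vanishing to extract is that $\E(-d)$ is $0$-regular in a suitable sense once we twist down. The idea is to run the Euler-sequence sequences $(\Lambda_{i,j})$ with $i = -d$: the sequence $(\Lambda_{-d,1})$ reads $0 \to \E(-d)\otimes\Omega^1(1) \to \E(-d)^{\oplus(n+1)} \to \E(-d+1) \to 0$, but it is more useful to run the complementary family. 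I would instead iterate
\begin{equation*}
  (\Lambda_{-d+j,j}):\quad 0 \to \E_{-d+j,j} \to \E(-d+j)^{\oplus\binom{n+1}{j}} \to \E_{-d+j+1,j-1} \to 0
\end{equation*}
for $j = n, n-1, \dots, 1$, splicing them together. Since $\E_{-d+n,n} = \E(-d+n)\otimes\Omega^n(n) = \E(-2d+n-(?))$ — more precisely $\Omega^n(n) = \O(-1)$, so $\E_{-d+n,n} = \E(-d+n-1)$ — the top of the tower is a genuine twist of $\E$, and the whole splice is a complex whose terms involve copies of $\E(-d+j)$.

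The cleaner route, and the one I would actually pursue, is to dualize: apply the main Theorem~\ref{pro-space-Lin-cohomology} to the bundle $\E^{\vee}$, which is again (up to twist) Ulrich, or directly use Beilinson's spectral sequence / the monad description. The point is that the hypercohomology spectral sequence built from $\bigoplus_q \Omega^q(q) \otimes H^q(\E(-d)\otimes \mathcal{O}(\text{shift}))$ degenerates because, by Case 4 and the natural-cohomology statement in Theorem~\ref{pro-space-Lin-cohomology} together with Remark~\ref{pro-space-Lin-cohomology-remark}, most of the cohomology groups $h^q(\E_{i,j})$ vanish; only the ``diagonal'' terms survive. So the first main step is: apply Beilinson's resolution to $\E(-d)$, producing a complex whose $j$-th term is $H^{?}(\E(-d)\otimes \Omega^{j}(j))^{\vee} \otimes \O(-j)$ or similar. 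By Theorem~\ref{pro-space-Lin-cohomology} applied with $i$ in the range around $-jd$ and the vanishing in Cases 1–4, the only nonzero contribution in homological degree $j$ comes from a single cohomology group, so the complex has terms $\O^{a_j}(-j)$ with $a_j = h^{?}(\E(-d)\otimes\Omega^j(j))$ (up to sign conventions), and it is exact except at the end where the cohomology is $\E(-d)$.

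The second step is to pin down the exponents $a_j$. By the last clause of Theorem~\ref{pro-space-Lin-cohomology}, all $h^q(\E_{i,j})$ are computed by the Euler characteristic formula, so I evaluate $\chi(\E(-d)\otimes\Omega^j(j)) = \chi(\E_{-d,j})$ using that formula with $i = -d$:
\begin{equation*}
  a_j = \pm\,\chi(\E_{-d,j}) = \pm\, rd^n\sum_{k=0}^{j}(-1)^{j-k}\binom{n+1}{k}\binom{\tfrac{-d+j-k}{d}+n}{n} = \pm\, rd^n\sum_{k=0}^{j}(-1)^{j-k}\binom{n+1}{k}\binom{\tfrac{j-k}{d}+n-1}{n},
\end{equation*}
where the last equality just uses $\tfrac{-d+j-k}{d}+n = \tfrac{j-k}{d} + n - 1$. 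Comparing with the claimed formula fixes the sign: $a_j = -\chi(\E_{-d,j})$, which is manifestly $\geq 0$ because in the relevant range $\E_{-d,j}$ has all its cohomology in a single positive degree (this is where Cases 1–4 of the previous theorem do the work — I must check that for $i=-d$ the only surviving cohomology is in degree $\geq 1$, so $\chi \leq 0$).

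I expect the main obstacle to be the exactness of the resulting complex, i.e., verifying that the Beilinson spectral sequence (or the splice of the $(\Lambda_{-d+j,j})$) actually degenerates and leaves only $\E(-d)$ as homology in the last spot, with no stray homology in the middle. This requires precisely the full strength of Theorem~\ref{pro-space-Lin-cohomology}: one needs $h^q(\E(-d)\otimes\Omega^j(j)) = 0$ for $q$ outside a single value for every $j$, and that the single surviving value is ``$q = j$'' on the nose (after the appropriate twist), so that the spectral sequence lives on one line. The bookkeeping of which twist of $\E$ sits at each stage, and confirming that the connecting maps assemble into a genuine complex of the stated shape $\O^{a_n}(-n)\to\cdots\to\O^{a_1}(-1)\to\E(-d)\to 0$, is the delicate part; the Euler-characteristic computation of the $a_j$ is then routine given Theorem~\ref{pro-space-Lin-cohomology}.
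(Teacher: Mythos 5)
Your second, ``cleaner'' route is exactly the paper's proof: apply Beilinson's spectral sequence to $\F=\E(-d)$, use Theorem~\ref{pro-space-Lin-cohomology} to kill all but one row of the $E_1$ page, and read off the exponents as $a_j=-\chi(\E_{-d,j})$ via the identity $\frac{-d+j-k}{d}+n=\frac{j-k}{d}+n-1$, just as you compute. The one point you leave vague, and in fact misstate, is \emph{which} cohomological degree survives. It is not ``$q=j$'': if the nonzero groups sat on the diagonal $p+q=0$, the spectral sequence would degenerate at $E_1$ and exhibit $\E(-d)$ as an iterated extension of the terms $\O^{a_j}(-j)$, not as the end of a resolution. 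What Case 4 of Theorem~\ref{pro-space-Lin-cohomology} actually gives for $i=-d$ (i.e.\ $q_0=1$) is that $h^q(\E(-d)\otimes\Omega^j(j))=0$ for every $q\neq 1$ and every $j$, and in addition $h^1(\E(-d))=0$ by the Ulrich condition. Hence the $E_1$ page is concentrated in the single row $q=1$ with vanishing $p=0$ entry; the $d_1$-complex along that row is a complex built from the bundles $\O^{a_j}(-j)$, $1\le j\le n$, whose cohomology ($=E_2=E_\infty$) must vanish except at $p=-1$, where it equals $\E(-d)$ --- and that is precisely the exact sequence~\eqref{pro-space-Lin-sequence-eq}. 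With this correction your sign analysis also closes: since only $h^1$ survives, $\chi(\E_{-d,j})=-h^1(\E_{-d,j})\le 0$, so $a_j=h^1(\E_{-d,j})=-\chi(\E_{-d,j})\ge 0$. The preliminary musings about splicing the sequences $(\Lambda_{-d+j,j})$ are not needed and can be dropped.
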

\begin{proof}
  By Beilinson's theorem (\cite{Okonek-Schneider-Spindler-88}, Chapter 2 Theorem 3.1.4), for any vector bundle $\F$ on $\P^n$ there is a spectral sequence
  \begin{equation*}
    E_1^{p,q} = H^q(\P^n, \F \otimes \Omega_{\P^n}^{-p}(-p))\otimes \O_{\P^n}(p) \Rightarrow E^{p+q}=
    \begin{cases}
      \F, & \mbox{if } p+q=0, \\
      0, & \mbox{otherwise}.
    \end{cases}
  \end{equation*}
  Let $\F=\E(-d)$. By Theorem \ref{pro-space-Lin-cohomology}, we have $h^q(\F \otimes \Omega^{-p}(-p))=0$ for $q\neq 1$ and $h^1(\F)=0$. By the properties of spectral sequence, we have $E_{2}^{p,1}=E_{\infty}^{p,1}=0 $ for $p\neq -1$ and $E_{2}^{-1,1}=E_{\infty}^{-1,1}=\F $, which gives the exact sequence (\ref{pro-space-Lin-sequence-eq}). At this time, we have
  $$a_j=-\chi(\E(-d)\otimes \Omega^{j}(j))=-rd^n\sum_{k=0}^{j}(-1)^{j-k}\binom{n+1}{k}\binom{\frac{-d+j-k}{d}+n}{n}.$$
  So we have done.
\end{proof}

\begin{cor}\label{pro-space-Lin-sequence-cor}
  A rank $r$ vector bundle $\E$ is an Ulrich bundle for $(\P^n,\O(1))$ if and only if $\E\simeq \O^r$.
\end{cor}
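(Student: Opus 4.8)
The plan is to read the statement off the resolution in Theorem~\ref{pro-space-Lin-sequence} once we observe that, for $d=1$, almost all of the integers $a_j$ vanish. The converse direction is the easy one: if $\E\simeq\O^r$, then by the Bott formula $h^q(\P^n,\O(-p))=0$ for every $q$ and every $1\le p\le n$ (a line bundle $\O(k)$ on $\P^n$ has nonzero cohomology only when $k\ge0$ or $k\le-n-1$), so $\O^r$ satisfies the defining condition of an Ulrich bundle for $(\P^n,\O(1))$.

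For the forward direction, suppose $\E$ is an Ulrich bundle of rank $r$ for $(\P^n,\O(1))$, so $d=1$. Theorem~\ref{pro-space-Lin-sequence} yields the exact sequence
\begin{equation*}
  0\rightarrow\O^{a_n}(-n)\rightarrow\cdots\rightarrow\O^{a_1}(-1)\rightarrow\E(-1)\rightarrow0,\qquad a_j=-\chi(\E_{-1,j}),
\end{equation*}
and the first thing I would do is show that $a_j=0$ for $2\le j\le n$. One route is to quote the $d=1$ part of Remark~\ref{pro-space-Lin-cohomology-remark} (equivalently Case~4 in the proof of Theorem~\ref{pro-space-Lin-cohomology}): when $d=1$ and $i=-1$, the relation $i+j=0$ holds only for $j=1$, so $h^q(\E_{-1,j})=0$ for all $q$ whenever $2\le j\le n$, whence $a_j=0$. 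Alternatively one can substitute $d=1$ and $i=-d=-1$ into the binomial formula for $a_j$ and evaluate: since $\sum_{m\ge0}\binom{n-1+m}{n}(-t)^m=-t/(1+t)^{n+1}$ and $(1+t)^{n+1}\cdot\bigl(-t/(1+t)^{n+1}\bigr)=-t$, the coefficient of $t^j$ shows that $\sum_{k=0}^{j}(-1)^{j-k}\binom{n+1}{k}\binom{n-1+j-k}{n}$ equals $-1$ if $j=1$ and $0$ otherwise; hence $a_1=r$ and $a_j=0$ for $2\le j\le n$.

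With these vanishings the complex collapses to $0\to\O^{a_1}(-1)\to\E(-1)\to0$, i.e.\ $\E(-1)\simeq\O^{a_1}(-1)$, so $\E\simeq\O^{a_1}$; comparing ranks forces $a_1=r$, and therefore $\E\simeq\O^r$.

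I do not anticipate any genuine obstacle here: the substantive work is already packaged into the Beilinson resolution of Theorem~\ref{pro-space-Lin-sequence} and the case analysis of Theorem~\ref{pro-space-Lin-cohomology}, and all that remains is the elementary vanishing $a_j=0$ for $j\ge2$, which is supplied either by the structural argument of Case~4 or by the short generating-function computation indicated above.
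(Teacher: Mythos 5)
Your proof is correct and follows essentially the same route as the paper: the converse via the Bott formula, and the forward direction by collapsing the resolution of Theorem~\ref{pro-space-Lin-sequence} using the vanishing $a_j=0$ for $j\ge 2$ supplied by Remark~\ref{pro-space-Lin-cohomology-remark}. Your supplementary generating-function verification of that vanishing is a correct but optional addition; the paper simply cites the remark.
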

\begin{proof}
  If $\E\simeq \O^r$, then $h^q(\E(-p))=0$ for any $q \in \Z$ and $1 \leq p \leq n$ by Bott formula. If $\E$ is an Ulrich bundles for $(\P^n,\O(1))$, we have an exact sequence $0\rightarrow \O^r(-1)\rightarrow \E(-1) \rightarrow 0$ by Theorem \ref{pro-space-Lin-sequence} and Remark \ref{pro-space-Lin-cohomology-remark}.
\end{proof}

\begin{cor}\label{pro-space-Lin-sequence-cor1}
  A rank $r$ vector bundle $\E$ is an Ulrich bundle for $(\P^1,\O(d))$ if and only if $\E\simeq \O^r(d-1)$.
\end{cor}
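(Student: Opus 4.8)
The plan is to mimic the proof of Corollary~\ref{pro-space-Lin-sequence-cor}, specializing Theorem~\ref{pro-space-Lin-sequence} to the case $n=1$. For the trivial direction I would observe that if $\E\simeq\O^r(d-1)$ then $\E(-d)\simeq\O^r(-1)$, and since $h^0(\P^1,\O(-1))=h^1(\P^1,\O(-1))=0$ by the Bott formula, while $\dim\P^1=1$ means only $p=1$ has to be checked, the bundle $\E$ is Ulrich for $(\P^1,\O(d))$.

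For the converse I would take a rank $r$ Ulrich bundle $\E$ for $(\P^1,\O(d))$ and apply Theorem~\ref{pro-space-Lin-sequence} with $n=1$. The resolution~(\ref{pro-space-Lin-sequence-eq}) then has no intermediate terms and collapses to a single short exact sequence $0\to\O^{a_1}(-1)\to\E(-d)\to 0$, that is, an isomorphism $\E(-d)\simeq\O^{a_1}(-1)$, so $\E\simeq\O^{a_1}(d-1)$; comparing ranks forces $a_1=r$. One may also evaluate the closed formula directly: with $n=j=1$ the $k=1$ summand drops out because $\binom{0}{1}=0$ under the formal convention, leaving $a_1=-rd\cdot(-1)\binom{1/d}{1}=r$. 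Hence $\E\simeq\O^r(d-1)$.

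An equally short alternative avoids Theorem~\ref{pro-space-Lin-sequence} entirely: by Grothendieck's splitting theorem one writes $\E\simeq\bigoplus_{i=1}^r\O(a_i)$, so the Ulrich condition $h^0(\E(-d))=h^1(\E(-d))=0$ gives $h^0(\P^1,\O(a_i-d))=h^1(\P^1,\O(a_i-d))=0$ for every $i$; since the only integer $m$ with $h^0(\P^1,\O(m))=h^1(\P^1,\O(m))=0$ is $m=-1$, we get $a_i=d-1$ for all $i$, and the converse is immediate as above.

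I do not expect any genuine obstacle here; the only point that needs a moment's care is the identification $a_1=r$, which follows either from reading off the rank of $\E$ in the exact sequence or from the convention $\binom{0}{1}=0$ in the formula of Theorem~\ref{pro-space-Lin-sequence}.
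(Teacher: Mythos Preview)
Your main argument is exactly the paper's proof: the Bott formula for the trivial direction, and Theorem~\ref{pro-space-Lin-sequence} with $n=1$ collapsing to $0\to\O^r(-1)\to\E(-d)\to0$ for the converse (the paper simply asserts $a_1=r$, while you verify it from the formula, which is fine). Your alternative via Grothendieck's splitting theorem is also correct and is precisely the ``other proof'' the paper alludes to in the remark following Corollary~\ref{pro-space-Lin-sequence-cor3}, though it is not written out there.
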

\begin{proof}
  If $\E\simeq \O^r(d-1)$, then $h^0(\E(-d))=h^1(\E(-d))=0$ by Bott formula. If $\E$ is an Ulrich bundles for $(\P^1,\O(d))$, we have an exact sequence $0\rightarrow \O^r(-1)\rightarrow \E(-d) \rightarrow 0$ by Theorem \ref{pro-space-Lin-sequence}.
\end{proof}

\begin{cor}\label{pro-space-Lin-sequence-cor2}
  A rank $r$ vector bundle $\E$ is an Ulrich bundle for $(\P^2,\O(d))$ if and only if $\E$ satisfies the following two conditions:
  \begin{enumerate}[1)]
    \item There is an exact sequence
      $$0 \rightarrow \O^{\frac{r}{2}(d-1)}(-2)\rightarrow \O^{\frac{r}{2}(d+1)}(-1)\rightarrow \E(-d) \rightarrow 0;$$
    \item The induced map $H^2(\alpha(-d)):H^2(\O^{\frac{r}{2}(d-1)}(-d-2)) \rightarrow H^2(\O^{\frac{r}{2}(d+1)}(-d-1))$ is injective (or surjective).
  \end{enumerate}
  In particular, if $r=2$, the condition 2) is trivial.
\end{cor}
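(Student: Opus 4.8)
The plan is to specialize Theorem~\ref{pro-space-Lin-sequence} to the case $n=2$ and then observe that the only part of the Ulrich vanishing not automatically supplied by the resolution is the behaviour of one cohomology map.

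For necessity, I would start from an Ulrich bundle $\E$ for $(\P^2,\O(d))$ and invoke Theorem~\ref{pro-space-Lin-sequence} to obtain an exact sequence $0\to\O^{a_2}(-2)\xrightarrow{\ \alpha\ }\O^{a_1}(-1)\to\E(-d)\to 0$. A direct evaluation of the closed formula for $a_j$ at $n=2$ (using $\binom{1}{2}=0$, $\binom{\frac{1}{d}+1}{2}=\frac{d+1}{2d^2}$ and $\binom{\frac{2}{d}+1}{2}=\frac{d+2}{d^2}$) gives $a_1=\frac{r}{2}(d+1)$ and $a_2=\frac{r}{2}(d-1)$, which is condition~1). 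For condition~2), twist this sequence by $\O(-d)$ and pass to the long exact cohomology sequence; since on $\P^2$ one has $h^0(\O(k))=0$ for $k<0$ and $h^1(\O(k))=0$ for every $k$ (Bott's formula), it follows that $H^1(\E(-2d))\cong\ker H^2(\alpha(-d))$ and $H^2(\E(-2d))\cong\mathrm{coker}\,H^2(\alpha(-d))$. The Ulrich hypothesis forces $h^1(\E(-2d))=h^2(\E(-2d))=0$, hence $H^2(\alpha(-d))$ is an isomorphism, in particular injective (equivalently surjective).

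For sufficiency, I would run the same computation in reverse. Given the exact sequence of condition~1), the long exact cohomology sequence together with the vanishings $h^0(\O(-1))=h^0(\O(-2))=h^1(\O(-1))=h^1(\O(-2))=h^2(\O(-1))=h^2(\O(-2))=0$ on $\P^2$ already yield $h^q(\E(-d))=0$ for all $q$, which is the $p=1$ half of the Ulrich condition. Twisting by $\O(-d)$ as before reduces the $p=2$ half to the vanishing of $\ker H^2(\alpha(-d))$ and $\mathrm{coker}\,H^2(\alpha(-d))$, i.e.\ to $H^2(\alpha(-d))$ being an isomorphism. By Serre duality the source $H^2(\O^{a_2}(-d-2))$ and target $H^2(\O^{a_1}(-d-1))$ both have dimension $\frac{r}{4}d(d-1)(d+1)$, so injectivity and surjectivity of $H^2(\alpha(-d))$ are each equivalent to bijectivity; this is why ``injective (or surjective)'' is the right phrasing. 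Hence condition~2) gives $h^q(\E(-2d))=0$ for all $q$, and $\E$ is Ulrich.

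Finally, for the last assertion I would note that when $r=2$ the exact sequence of condition~1) already pins down the first Chern class: computing Chern classes gives $c_1(\E(-d))=-(d+1)+2(d-1)=d-3$, hence $c_1(\E)=3(d-1)$. Since $\E$ has rank $2$ it satisfies $\E\simeq\E^{\vee}(\det\E)=\E^{\vee}(3(d-1))$ (as in Proposition~\ref{pro-space-rank 2}), so Serre duality on $\P^2$ gives $h^q(\E(-2d))=h^{2-q}(\E^{\vee}(2d-3))=h^{2-q}(\E(-d))=0$ for all $q$; thus condition~2) holds automatically. The only delicate points are keeping the twists straight and carrying out the dimension count that makes injectivity and surjectivity of $H^2(\alpha(-d))$ interchangeable; the remainder is a routine application of the long exact sequence and Bott's formula.
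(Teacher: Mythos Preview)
Your proposal is correct and follows essentially the same approach as the paper: invoke Theorem~\ref{pro-space-Lin-sequence} for condition~1), read off the $p=1$ vanishing from the resolution, and identify condition~2) with the remaining $p=2$ vanishing via the long exact sequence, then for $r=2$ compute $c_1(\E)=3(d-1)$ and appeal to Proposition~\ref{pro-space-rank 2}. The only cosmetic difference is that the paper deduces the equivalence of injectivity and surjectivity of $H^2(\alpha(-d))$ from $\chi(\E(-2d))=0$ (together with $h^0(\E(-2d))=0$), whereas you do the equivalent direct dimension count $\dim H^2(\O^{a_2}(-d-2))=\dim H^2(\O^{a_1}(-d-1))=\tfrac{r}{4}d(d-1)(d+1)$.
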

\begin{proof}
  If $\E$ is an Ulrich bundles for $(\P^2,\O(d))$, we have the condition 1) by Theorem \ref{pro-space-Lin-sequence}. The exact sequence implies $h^q(\E(-d))=0$ for all $q \in \Z$ and $\chi(\E(-2d))=h^0(\E(-2d))=0$. Condition 2) is equivalent to $h^1(\E(-2d))=0$ (or $h^2(\E(-2d))=0$).

  If $r=2$, the exact sequence implies that $c_1(\E)=3(d-1)$. By Proposition \ref{pro-space-rank 2}, the condition 2) is trivial.
\end{proof}

\begin{cor}\label{pro-space-Lin-sequence-cor3}
  A rank $r$ vector bundle $\E$ is an Ulrich bundle for $(\P^3,\O(d))$ if and only if $\E$ satisfies the following two conditions:
  \begin{enumerate}[1)]
    \item There is an exact sequence
        $$0 \rightarrow \O^{\frac{r}{6}(2d-1)(d-1)}(-3)\rightarrow \O^{\frac{2r}{3}(d^2-1)}(-2)\rightarrow \O^{\frac{r}{6}(2d+1)(d+1)}(-1)\rightarrow \E(-d) \rightarrow 0;$$
    \item The induced map $H^3(\alpha(-id)):H^3(\O^{\frac{r}{6}(2d-1)(d-1)}(-id-3)) \rightarrow H^3(\O^{\frac{2r}{3}(d^2-1)}(-id-2))$ is injective and $H^3(\beta(-id)):H^3(\O^{\frac{2r}{3}(d^2-1)}(-id-2)) \rightarrow H^3(\O^{\frac{r}{6}(2d+1)(d+1)}(-id-1))$ is surjective for $1\leq i\leq2$.
  \end{enumerate}
  In particular, if $r=2$, the condition 2) is replaced by that $H^2(\alpha(-d))$ is injective.
\end{cor}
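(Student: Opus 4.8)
The plan follows the pattern of Corollary \ref{pro-space-Lin-sequence-cor2}: I would reduce both implications, via the resolution of Theorem \ref{pro-space-Lin-sequence}, to the defining vanishings $h^q(\E(-pd))=0$ for all $q$ and $p=1,2,3$. First I would specialize Theorem \ref{pro-space-Lin-sequence} to $n=3$, so that $a_j=-rd^3\sum_{k=0}^{j}(-1)^{j-k}\binom{4}{k}\binom{\frac{j-k}{d}+2}{3}$; a short binomial computation, in which the $k=j$ term drops out because $\binom{2}{3}=0$, rewrites these as $a_1=\frac{r}{6}(2d+1)(d+1)$, $a_2=\frac{2r}{3}(d^2-1)$, $a_3=\frac{r}{6}(2d-1)(d-1)$, which is condition 1) for any Ulrich $\E$. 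Conversely a rank $r$ bundle carrying such a resolution automatically has the right rank, since $a_1-a_2+a_3=r$, and (twisting back) first Chern class $4(d-1)$ when $r=2$.

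The core step is to read off the cohomology of $\E(-pd)$ from this resolution. Twisting the sequence in 1) by $\O(-id)$ resolves $\E(-(i+1)d)$ by the bundles $\O^{a_j}(-id-j)$, $j=1,2,3$; write $\alpha$ and $\beta$ for its two maps. On $\P^3$, Bott's formula says $\O(m)$ is acyclic unless $m\geq 0$ (only $H^0\neq 0$) or $m\leq -4$ (only $H^3\neq 0$). For $i=0$ all three terms $\O(-1),\O(-2),\O(-3)$ are acyclic, so splitting the four-term sequence into two short exact sequences and chasing gives $h^q(\E(-d))=0$ for every $q$, using nothing beyond 1). For $i=1,2$ and $d\geq 1$ every term $\O^{a_j}(-id-j)$ has cohomology in degree $3$ only, and the same two short exact sequences yield $h^0(\E(-(i+1)d))=0$, $h^1(\E(-(i+1)d))=\dim\ker H^3(\alpha(-id))$ and $h^3(\E(-(i+1)d))=\dim\operatorname{coker}H^3(\beta(-id))$, while $h^2(\E(-(i+1)d))$ is then determined by the Euler characteristic. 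Since $\chi(\E(-2d))=rd^3\binom{1}{3}=0$ and $\chi(\E(-3d))=rd^3\binom{0}{3}=0$ by Theorem \ref{pro-space-Eisenbud}, this forces $h^2(\E(-(i+1)d))=h^1(\E(-(i+1)d))+h^3(\E(-(i+1)d))$ for $i=1,2$. (One may instead run the spectral sequence of the complex, whose $E_1$-page occupies a single row.)

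With this dictionary both directions are immediate. If $\E$ is Ulrich then $h^q(\E(-2d))=h^q(\E(-3d))=0$ for all $q$, hence $H^3(\alpha(-id))$ is injective and $H^3(\beta(-id))$ is surjective for $i=1,2$, which is 2). Conversely, a rank $r$ bundle satisfying 1) and 2) has $h^q(\E(-d))=0$ from 1) alone, while for $i=1,2$ injectivity of $H^3(\alpha(-id))$ kills $h^1(\E(-(i+1)d))$, surjectivity of $H^3(\beta(-id))$ kills $h^3(\E(-(i+1)d))$, and then $\chi=0$ kills $h^2(\E(-(i+1)d))$; so all the Ulrich vanishings hold. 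For $r=2$, condition 1) forces $c_1(\E)=4(d-1)$, so $\E\simeq\E^\vee(4(d-1))$ and Serre duality gives $h^q(\E(-pd))=h^{3-q}(\E(-(4-p)d))$; the $p=3$ vanishings are thus equivalent to the automatic $p=1$ ones, while of the $p=2$ vanishings only $h^1(\E(-2d))=0$ is not automatic, i.e.\ only the $i=1$ injectivity statement of condition 2) survives, which is the asserted reduction.

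I expect the only genuinely delicate points to be the binomial bookkeeping that identifies the $a_j$, and the observation that $h^2(\E(-(i+1)d))$ is pinned down purely by the vanishing of $\chi$ once the two $H^3$-maps behave well; everything else is a diagram chase resting on Bott vanishing on $\P^3$, so I do not anticipate a conceptual obstacle.
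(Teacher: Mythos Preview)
Your proposal is correct and follows essentially the same approach as the paper: split the four-term resolution into two short exact sequences via $\F=\operatorname{coker}\alpha$, use Bott vanishing on $\P^3$ to reduce everything to the $H^3$-row, translate injectivity of $H^3(\alpha(-id))$ and surjectivity of $H^3(\beta(-id))$ into $h^1(\E(-(i+1)d))=0$ and $h^3(\E(-(i+1)d))=0$ respectively, and finish with $\chi=0$; the $r=2$ reduction is exactly Proposition~\ref{pro-space-rank 2}. One small slip: in the converse direction you cite Theorem~\ref{pro-space-Eisenbud} for $\chi(\E(-2d))=\chi(\E(-3d))=0$, but that theorem presupposes $\E$ is Ulrich; instead compute $\chi$ directly from the resolution in condition~1), which already determines the Hilbert polynomial and gives the same vanishing, as the paper's proof does.
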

\begin{proof}
  Let $\F$ be the cokernel of $\alpha$. The exact sequence in condition 1) is equivalent to two short exact sequence
  $$0 \rightarrow \O^{\frac{r}{6}(2d-1)(d-1)}(-3) \rightarrow \O^{\frac{2r}{3}(d^2-1)}(-2) \rightarrow \F \rightarrow 0,$$
  and
  $$0\rightarrow \F \rightarrow \O^{\frac{r}{6}(2d+1)(d+1)}(-1) \rightarrow \E(-d) \rightarrow 0.$$
  So the sequences imply that $\chi(\E(-id))=h^0(\E(-id))=h^q(\E(-d))=0$ for $2\leq i\leq3$ and all $q \in \Z$. The injection of $H^3(\alpha(-id))$ is equivalent to $h^2(\F(-id))=0$, which is equivalent to $h^1(\E(-(i+1)d))=0$. The surjection of $H^3(\beta(-id))$ is equivalent to $h^3(\E(-id))=0$.

  If $r=2$, the exact sequence implies that $c_1(\E)=4(d-1)$. By Proposition \ref{pro-space-rank 2}, to prove that the cokernel $\E$ in condition 1) is an Ulrich bundle, we only need to prove $h^1(\E(-2d))=0$, which is is equivalent to the injection of $H^3(\alpha(-d))$.
\end{proof}

\begin{remark}
For $n=2$, Coskun and Genc have found this exact sequence in \cite{Coskun-Genc-16}. The results in Corollary \ref{pro-space-Lin-sequence-cor} and Corollary \ref{pro-space-Lin-sequence-cor1} are well known, and there are other proofs using Grothendieck’s theorem and the splitting criterion of Horrocks.
\end{remark}

\section{Applications}

By Proposition \ref{pro-space-rank 1}, Corollary \ref{pro-space-Lin-sequence-cor} and Corollary \ref{pro-space-Lin-sequence-cor1}, if we want to find a non-trivial Ulrich bundle $\E$ of rank $r$ for $(\P^n,\O(d))$, then we must have $r\geq 2, d\geq 2$ and $n\geq 2$. In this section, we will give some interesting examples for $r=2$ or $3$.

\begin{eg}
  Consider the case $n=2$ and $r=2$. By Corollary \ref{pro-space-Lin-sequence-cor2}, finding a rank 2 Ulrich bundle for $(\P^2, \O(d))$ is equivalent to finding a morphism $\alpha : \O^{d-1}(-2) \rightarrow \O^{d+1}(-1)$ such that the cekernel of $\alpha$ is locally free. For example, if we assume that $\{x,y,z\}$ are the homogeneous coordinates of $\P^2$, we can take
  \begin{equation*}
    \alpha=\left(
    \begin{array}{ccccccc}
      x & y & z & 0 & 0 &\cdots & 0 \\
      0 & x & y & z & 0 &\cdots & 0 \\
      0 & 0 & x & y & z &\cdots & 0 \\
      \vdots & \vdots & \vdots &\vdots & \vdots & \ddots & \vdots \\
      0 & \cdots & 0 & 0&x & y & z
    \end{array}
    \right).
  \end{equation*}
  For any point $p \in \P^2$ we have $\mathrm{rank}(\alpha_{p})=d-1$, then the cekernel $\G$ of $\alpha$ must be locally free. Hence, $\G(d)$ is an Ulrich bundle of rank $2$ for $(\P^2, \O(d))$.
\end{eg}

This example is given by Eisenbud, Schreyer and Weyman in \cite{Eisenbud-Schreyer-Weyman-03}  Proposition 5.9. There are two other proofs for the existence of rank 2 Ulrich bundles for $(\P^2, \O(d))$ given by Beauville and Casnati, in \cite{Beauville-16.10} Proposition 4 and \cite{Casnati-16.09} Theorem 1.1. On one hand, by Beauville's result, there exists an Ulrich bundle of rank $2!$ for any $(\mathds{P}^{2}, \mathcal{O}(d))$. On another hand, by Serre construction, there exists a rank 2 Ulrich bundle $\E$ for $(\mathds{P}^{2}, \mathcal{O}(d))$ fitting into the exact sequence
$$0 \rightarrow \O(d-3) \rightarrow \E \rightarrow \mathcal{I}_{Z}(2d)\rightarrow 0,$$
where $Z \subseteq \P^2$ is a general set of $\binom{d+2}{2}+1$ points.

In \cite{Coskun-Genc-16} Theorem 5.2 and Theorem 5.4, Coskun and Genc proved that there is a unique Ulrich bundle of rank $2k$ for $(\mathds{P}^{2}, \mathcal{O}(2))$. As an application of Theorem \ref{pro-space-Lin-sequence}, we give another elementary proof for the case $k=1$.

\begin{pro}\label{pro-space-Lin-n=d=r=2}
  There is a unique Ulrich bundle of rank $2$ for $(\mathds{P}^{2}, \mathcal{O}(2))$.
\end{pro}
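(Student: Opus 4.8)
The plan is to apply Corollary \ref{pro-space-Lin-sequence-cor2} with $n=d=r=2$, which reduces the whole question to a concrete classification of one morphism. That corollary says a rank $2$ bundle $\E$ on $\P^2$ is Ulrich for $(\P^2,\O(2))$ if and only if there is an exact sequence
\[
0 \longrightarrow \O(-2) \stackrel{\alpha}{\longrightarrow} \O^3(-1) \longrightarrow \E(-2) \longrightarrow 0,
\]
the second condition of the corollary being automatic when $r=2$. Since $\mathrm{Hom}(\O(-2),\O^3(-1)) \cong H^0(\P^2,\O(1))^{\oplus 3}$, the map $\alpha$ is just a triple $(\ell_1,\ell_2,\ell_3)$ of linear forms. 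The first thing to check is when its cokernel is a vector bundle: this happens exactly when $\ell_1,\ell_2,\ell_3$ have no common zero on $\P^2$, and since $H^0(\P^2,\O(1))$ is $3$-dimensional this is equivalent to $\ell_1,\ell_2,\ell_3$ being linearly independent — if they spanned a subspace of dimension $\le 2$, their common zero locus would be a nonempty linear subspace, and over such a point the fibre map of $\alpha$ would vanish, so the cokernel could not be locally free of rank $2$.

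Second, I would use the automorphisms of the two outer bundles. The group $\mathrm{Aut}(\O(-2))\times\mathrm{Aut}(\O^3(-1)) = k^\times\times GL_3(k)$ acts on these morphisms by $\alpha\mapsto g\alpha c^{-1}$, and whenever $g\alpha = \alpha' c$ the universal property of the cokernel (together with the five lemma) produces an isomorphism between the two resolutions, hence $\E\cong\E'$. So it is enough to see that this action is transitive on the admissible $\alpha$'s. Fixing a basis $e_1,e_2,e_3$ of $H^0(\P^2,\O(1))$ and writing $\ell_i = \sum_j M_{ij}e_j$, admissibility means $M\in GL_3(k)$, and acting by $g=M^{-1}$ sends $\alpha$ to $\alpha_0:=(e_1,e_2,e_3)$. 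Hence every rank $2$ Ulrich bundle $\E$ satisfies $\E(-2)\cong\mathrm{coker}(\alpha_0)$; comparing $\alpha_0$ with the twisted Euler sequence $0\to\O(-2)\to\O^3(-1)\to T_{\P^2}(-2)\to 0$ then forces $\E\cong T_{\P^2}$. Conversely $\alpha_0$ has linearly independent entries, so $T_{\P^2}(-2)$ is locally free and $T_{\P^2}$ is Ulrich by Corollary \ref{pro-space-Lin-sequence-cor2}, which settles existence (also classical, cf. the example above).

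Once Corollary \ref{pro-space-Lin-sequence-cor2} is available there is no real difficulty here; the two points that need genuine care are the equivalence ``the cokernel of $\alpha$ is locally free $\iff$ the three linear forms are independent'' and the transitivity of the $k^\times\times GL_3(k)$-action, both of which are elementary linear algebra. One should note that we never need the converse statement that an isomorphism of cokernels is induced by this group action: since \emph{every} admissible $\alpha$ lies in the single orbit of $\alpha_0$, any two rank $2$ Ulrich bundles for $(\P^2,\O(2))$ are each isomorphic to $\mathrm{coker}(\alpha_0)$ and therefore to one another.
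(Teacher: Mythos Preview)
Your proof is correct and follows essentially the same route as the paper: both arguments specialise Corollary~\ref{pro-space-Lin-sequence-cor2} to $n=d=r=2$, observe that $\alpha$ is given by three linear forms which must be linearly independent (equivalently, the coefficient matrix lies in $GL_3(k)$) for the cokernel to be locally free, and then use the $GL_3(k)$-action on $\O^3(-1)$ to bring any admissible $\alpha$ to the standard one. The only addition you make beyond the paper is the explicit identification of the unique bundle with $T_{\P^2}$ via the twisted Euler sequence, which is a nice touch but not in the original proof.
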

\begin{proof}
  We have found an Ulrich bundle $\E$ in the above example. Let $\E'$ be another Ulrich bundle of rank $2$ for $(\mathds{P}^{2}, \mathcal{O}(2))$, which is determinated by the morphism $\alpha' : \O(-2) \rightarrow \O^{3}(-1)$. In general, we can assume that
  $$\alpha'=(a_{1,1}x+a_{2,1}y+a_{3,1}z\;\;a_{1,2}x+a_{2,2}y+a_{3,2}z\;\;a_{1,3}x+a_{2,3}y+a_{3,3}z).$$
  Since for any point $p \in \P^2$ we have $\mathrm{rank}(\alpha'_{p})=1$, the determinant of matrix $(a_{i,j})$ is non zero. Hence, $A=(a_{i,j}) \in GL(3,k)$ is an automorphism of $\O^3(-1)$ and we have the commutative graph
  $$\xymatrix{
    0 \ar[r] & \O(-2) \ar[r]^{\alpha}\ar[d]_{id}^{\sim}  & \O^{3}(-1)\ar[r]\ar[d]_{A}^{\sim} & \E(-2) \ar[r]\ar[d] & 0 \\
    0 \ar[r] & \O(-2) \ar[r]^{\alpha'}  & \O^{3}(-1)\ar[r]  & \E'(-2)\ar[r] & 0
  }$$
  where $\alpha=(x\;\;y\;\;z)$. So we have $\E' \simeq \E$.
\end{proof}

\begin{eg}
  Consider the case $n=3,r=2$ and $d=2$. By Corollary \ref{pro-space-Lin-sequence-cor3}, finding a rank 2 Ulrich bundle for $(\P^3, \O(2))$ is equivalent to finding two morphisms $\alpha : \O^{1}(-3) \rightarrow \O^{4}(-2)$ and $\beta: \O^{4}(-2) \rightarrow \O^{5}(-1)$, such that $\mathrm{Im}\alpha =\mathrm{ker}\beta$ and the cekernel of $\beta$ is locally free and $H^3(\alpha(-2)): H^3(\O^{1}(-5))\rightarrow H^3(\O^{4}(-4))$ is injective. For example, if we assume that $\{x,y,z,w\}$ are the homogeneous coordinates of $\P^3$, we can take
  \begin{equation*}
    \alpha=\left(
    \begin{array}{cccc}
      x & y & z & w
    \end{array}
    \right),
    \beta=\left(
    \begin{array}{rrrrr}
       y &  z &  w &  0 &  0  \\
      -x &  0 &  0 &  z &  w  \\
       w & -x &  0 & -y &  0  \\
      -z &  0 & -x &  0 & -y
    \end{array}
    \right).
  \end{equation*}
  Since $\alpha \beta=0$ and for any point $p \in \P^3$ the rank of $\beta_{p}$ is 3, we have $\mathrm{Im}\alpha =\mathrm{ker}\beta$ and the cekernel of $\beta$ is locally free. Let $\F$ be the cokernel of $\alpha$, then $\F \simeq \mathcal{T}_{P^3}(-3)$ by definition. Since $h^2(\mathcal{T}_{P^3}(-5))=0$ by Bott formula, we can get that $H^3(\alpha(-2)): H^3(\O^{1}(-5))\rightarrow H^3(\O^{4}(-4))$ is injective. Hence, the cekernel $\G$ of $\beta$ is locally free, and $\G(2)$ is an Ulrich bundle of rank $2$ for $(\P^3, \O(2))$.
\end{eg}

In \cite{Eisenbud-Schreyer-Weyman-03} Proposition 5.11, Eisenbud, Schreyer and Weyman give another example for this case. Let $\F$ be the null correlation bundle on $\P^3$, which is defined by the exact sequence
$$0\rightarrow \F \rightarrow \mathcal{T}_{P^3}(-1)\rightarrow \O(1)\rightarrow 0.$$
Then $\E\doteq \F(2)$ is an rank 2 Ulrich bundle for $(\P^3, \O(2))$. Moreover, they proved the uniqueness of rank 2 Ulrich bundles for $(\P^3, \O(2))$. Similarly, as an application of Theorem \ref{pro-space-Lin-sequence}, we give another proof for this uniqueness.

\begin{pro}
  There is a unique Ulrich bundle of rank $2$ for $(\mathds{P}^{3}, \mathcal{O}(2))$.
\end{pro}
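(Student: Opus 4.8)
The plan is to mimic the uniqueness argument already carried out for $(\P^2,\O(2))$ in Proposition \ref{pro-space-Lin-n=d=r=2}, using the explicit resolution from Corollary \ref{pro-space-Lin-sequence-cor3}. By that corollary, a rank $2$ Ulrich bundle $\E$ for $(\P^3,\O(2))$ is the same data as a complex
\begin{equation*}
0 \rightarrow \O(-3) \xrightarrow{\alpha} \O^{4}(-2) \xrightarrow{\beta} \O^{5}(-1) \rightarrow \E(-2) \rightarrow 0
\end{equation*}
(the Betti numbers $a_3=1$, $a_2=4$, $a_1=5$ come out of the formula with $n=3,d=2,r=2$) with $\mathrm{Im}\,\alpha=\ker\beta$, $\mathrm{coker}\,\beta$ locally free, and $H^2(\alpha(-2))$ injective. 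So it suffices to show any two such complexes are isomorphic, which by a diagram chase reduces $\E'\simeq\E$ to $\alpha'$ and $\alpha$ being equivalent under the automorphism groups of the free modules.

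First I would analyze $\alpha$. It is a $4\times 1$ matrix of linear forms on $\P^3$; its transpose is a section of $\O(1)^{\oplus 4}$, equivalently four linear forms $\ell_1,\dots,\ell_4$. Pointwise $\alpha$ must have rank $1$ (so that $\ker\beta/\mathrm{Im}\,\alpha$ behaves well and $\mathrm{coker}\,\beta$ is a bundle), which forces the $\ell_i$ to span the full $4$-dimensional space of linear forms — i.e. the cokernel $\F$ of $\alpha$ is $\mathcal{T}_{\P^3}(-3)$ up to isomorphism, exactly as in the worked example. Concretely, after acting by $GL(4,k)$ on the target $\O^4(-2)$ I may assume $\alpha=(x\;y\;z\;w)^{T}$ (here I should also check the rank-$1$ locus and the $H^2(\alpha(-2))$-injectivity condition are automatic once $\F\cong\mathcal{T}_{\P^3}(-3)$, which follows from Bott since $h^2(\mathcal{T}_{\P^3}(-5))=0$). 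Then $\ker(\beta)=\mathrm{Im}(\alpha)$ means $\beta$ factors through $\F=\mathcal{T}_{\P^3}(-3)$, giving a surjection $\O^5(-1)\twoheadleftarrow$... more precisely $\beta$ corresponds to an injection $\F=\mathrm{coker}(\alpha)\hookrightarrow\O^5(-1)$ with locally free cokernel, i.e. $\E(-2)$ is recovered as that cokernel.

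So the crux is: classify, up to the action of $GL(5,k)$ on $\O^5(-1)$ (and automorphisms of $\F$ lifting to the identity on $\E$), the subbundle inclusions $j:\mathcal{T}_{\P^3}(-3)\hookrightarrow\O^5(-1)$ with locally free rank-$2$ cokernel. Dualizing and twisting, such a $j$ is the same as a surjection $\O^5(2)\twoheadrightarrow \Omega^1_{\P^3}(3)$, i.e. a choice of $5$ generators of $H^0(\Omega^1_{\P^3}(3))$ that generate the sheaf; but $h^0(\Omega^1_{\P^3}(3))$ computed by Bott is exactly... I would compute this (Bott: $h^0(\P^3,\Omega^1(3))=\binom{3+3-1}{3}\binom{2}{1}=\binom{5}{3}\cdot 2 = 20$ — wait, that's too big, so the generating sections form a $5$-dimensional subspace of a $20$-dimensional space, and I must use the kernel bundle $\E^\vee(-1)$ to cut this down). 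The cleaner route: the kernel of $\O^5(-1)\twoheadrightarrow\E(-2)$ is $\F=\mathcal{T}_{\P^3}(-3)$, and $h^0(\E^\vee\otimes\mathcal{T}_{\P^3}(-1))$, $h^0(\mathcal{T}_{\P^3}(-1)^{\oplus\vee}\dots)$ control whether the extension/inclusion is rigid. I expect the main obstacle to be precisely this last step — showing the $GL$-orbit of the good $\beta$'s (equivalently of the good $j$'s) is a single orbit. I would handle it by taking $\E$ to be the null correlation bundle twisted, writing down its canonical $j$, and proving every other $j$ is conjugate to it: show $\mathrm{Hom}(\mathcal{T}_{\P^3}(-3),\O^5(-1))\cong k^5\otimes H^0(\Omega^1_{\P^3}(3))$ modulo the $GL_5$-action and modulo $\mathrm{Aut}$ has the expected dimension, or more elementarily chase the diagram as in Proposition \ref{pro-space-Lin-n=d=r=2} using that $\mathrm{Aut}(\mathcal{T}_{\P^3}(-3))=k^{*}$ (since $\mathcal{T}_{\P^3}$ is simple) so the only freedom beyond $GL_5$ is scaling, and a standard argument with the Koszul-type presentation of $\mathcal{T}_{\P^3}$ pins down $\beta$ up to that scaling. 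Once the two complexes are shown isomorphic, the induced vertical map on cokernels is an isomorphism $\E\xrightarrow{\sim}\E'$, completing the proof.
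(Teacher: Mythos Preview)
Your argument tracks the paper's through the normalization of $\alpha$ and the identification $\mathrm{coker}(\alpha)\cong\mathcal{T}_{\P^3}(-3)$, but then stalls exactly where the paper makes its key move. Rather than attacking the inclusions $\mathcal{T}_{\P^3}(-3)\hookrightarrow\O^{5}(-1)$ head-on modulo $GL(5,k)$, the paper twists by $\O(1)$, uses the isomorphism $\Omega^{2}(2)\cong\mathcal{T}_{\P^3}(-2)$, and compares the row $0\to\mathcal{T}_{\P^3}(-2)\to\O^{5}\to\E'(-1)\to 0$ against the standard sequence $0\to\Omega^{2}(2)\to\O^{6}\to\Omega(2)\to 0$ inside a $3\times 3$ diagram. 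The rightmost column then presents $\E'(-1)$ as the cokernel of a single nowhere-vanishing section $s'\in H^{0}(\Omega(2))$; the paper finishes by writing $s,s'$ inside $H^{0}(\O^{4}(1))$ via the Euler inclusion $\Omega(2)\subset\O^{4}(1)$, setting $B=(a'_{i,j})(a_{i,j})^{-1}\in GL(4,k)$, and declaring this an isomorphism $\Omega(2)\to\Omega(2)$ carrying $s$ to $s'$. This reduction from a map into $\O^{5}(-1)$ down to a single section of $\Omega(2)$ is the idea your sketch is missing.

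That said, your suspicion that the last step is ``the main obstacle'' is well founded, and your abandoned dimension count is actually pointing at a genuine obstruction rather than a gap in technique. In your setup the group $GL(5,k)\times\mathrm{Aut}(\mathcal{T}_{\P^3}(-3))$ has dimension $25+1$ and acts on $\mathrm{Hom}(\mathcal{T}_{\P^3}(-3),\O^{5}(-1))\cong H^{0}(\Omega^{1}(2))^{\oplus 5}$ of dimension $30$, so a single orbit is not to be expected. The same issue persists in the paper's reformulation: $h^{0}(\Omega(2))=6$ while $\Omega$ is simple, so $\mathrm{Aut}_{\O}(\Omega(2))=k^{*}$, and a general $B\in GL(4,k)$ acting on $\O^{4}(1)$ does \emph{not} preserve the Euler subsheaf $\Omega(2)\subset\O^{4}(1)$ (only scalar matrices do). This is consistent with the classical description of null correlation bundles as the $5$-dimensional family $\P(\Lambda^{2}V^{*})\setminus G(2,4)$: the cokernels of nowhere-vanishing sections $s\in H^{0}(\Omega(2))\cong\Lambda^{2}V^{*}$ are pairwise non-isomorphic in general. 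So the step you could not carry out cannot be carried out as stated; the difficulty you flagged is intrinsic, not a missing trick.
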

\begin{proof}
  We have found an Ulrich bundle $\E$ in the above example. Let $\E'$ be another Ulrich bundle of rank $2$ for $(\mathds{P}^{3}, \mathcal{O}(2))$, which is determinated by the morphism $\alpha' : \O(-3) \rightarrow \O^{4}(-2)$ and $\beta': \O^{4}(-2) \rightarrow \O^{5}(-1)$. If we assume that $\F'$ is the cokernel of $\alpha'$, as the proof of Proposition \ref{pro-space-Lin-n=d=r=2}, we have $\F' \simeq \mathcal{T}_{P^3}(-3)$. Without loss of generality, we can assume that $\F' = \mathcal{T}_{P^3}(-3)$. Now we have the commutative graph
  $$\xymatrix{
      &    & 0\ar[d] & 0 \ar[d] &  \\
      &    & \O\ar[r]^{\sim}\ar[d] & \O \ar[d]^{s'} &  \\
    0 \ar[r] & \Omega^2(2) \ar[r]\ar[d]^{\sim}  & \O^{6}\ar[r]\ar[d]^{\pi} & \Omega(2) \ar[r]\ar[d] & 0 \\
    0 \ar[r] & \mathcal{T}_{P^3}(-2) \ar[r]  & \O^{5}\ar[r]\ar[d]  & \E'(-1)\ar[r]\ar[d] & 0 \\
      &    & 0 & 0  &
  }$$
  which implies that $\E'$ is determinated by a section $s'$. Since $\E'(-1)$ is locally free, $s' \in H^0(\Omega(2))$ is base point free. If we consider the following short exact sequence
  $$0 \rightarrow \Omega(2)\rightarrow \O^{4}(1) \rightarrow \O(2) \rightarrow 0,$$
  then $H^0(\Omega(2)) \subseteq H^0(\O^{4}(1))$. Let $\{x_{i}\}$ be the homogeneous coordinates of $\P^3$, we can assume that $s'=(s_j')=(\sum_{i=1}^{4}a_{i,j}'x_{i})$ where $(a_{i,j}') \in GL(4, k)$. Similarly, if $\E$ is determinated by a section $s$, we also have $s=(s_j)=(\sum_{i=1}^{4}a_{i,j}x_{i})$ with $(a_{i,j}) \in GL(4, k)$. Let $B=(a_{i,j}')(a_{i,j})^{-1}$. We can get the commutative graph
  $$\xymatrix{
    0 \ar[r] & \O \ar[r]^{s}\ar[d]_{id}^{\sim}  & \Omega(2)\ar[r]\ar[d]_{B}^{\sim} & \E(-1) \ar[r]\ar[d] & 0 \\
    0 \ar[r] & \O \ar[r]^{s'}  & \Omega(2)\ar[r]  & \E'(-1)\ar[r] & 0
  }$$
  So we have $\E' \simeq \E$.
\end{proof}

\begin{eg}
  Consider the case $n=2,r=3$ and $d=3$. By Corollary \ref{pro-space-Lin-sequence-cor2}, finding a rank 3 Ulrich bundle for $(\P^2, \O(3))$ is equivalent to finding a morphism $\alpha : \O^{3}(-2) \rightarrow \O^{6}(-1)$， such that the cekernel of $\alpha$ is locally free and $H^2(\alpha(-3)): H^2(\O^{3}(-5))\rightarrow H^2(\O^{6}(-4))$ is surjective. By Serre duality, The surjection of $H^2(\alpha(-3))$ is equivalent to the injection of $\delta: \mathrm{Hom}(\O^{6}(-4),\O(-3))\rightarrow \mathrm{Hom}(\O^{3}(-5),\O(-3))$. If we assume that $\{x_1,x_2,x_3\}$ are the homogeneous coordinates of $\P^2$, and
  $$\alpha=(\sum_{p=1}^{3} a_{i,j}^{p}x_{p}) \in \mathrm{Hom}(\O^{3}(-2),\O^{6}(-1)),\;\;f=(\sum_{q=1}^{3}b^{j,q}x_{q})\in \mathrm{Hom}(\O^{6}(-4),\O(-3)).$$
  We have the relation of equivalence
  $$\delta (f)=f \circ \alpha =\alpha f=(\sum_{j=1}^{6}(\sum_{p=1}^{3} a_{i,j}^{p}x_{p})(\sum_{q=1}^{3}b^{j,q}x_{q}))=(0)$$
  \begin{equation}
    \Leftrightarrow
    \begin{cases}
      \sum_{j=1}^{6} a_{i,j}^{p}b^{j,p}=0, & \mbox{for } 1\leq i \leq 3, 1 \leq p \leq 3; \\
      \sum_{j=1}^{6} a_{i,j}^{p}b^{j,q}+\sum_{j=1}^{6} a_{i,j}^{q}b^{j,p}=0, & \mbox{for } 1\leq i \leq 3, 1 \leq p < q \leq 3.
    \end{cases}
  \end{equation}
  Let
  \begin{equation*}
    \begin{cases}
      y_{6(p-1)+j}\doteq b^{j,p}, & 1\leq j \leq 6, 1 \leq p \leq 3 \\
      A_{p}\doteq (a_{i,j}^{p})_{1\leq i \leq 3, 1\leq j \leq 3}, &  1 \leq p \leq 3 \\
      B_{p}\doteq (a_{i,j}^{p})_{1\leq i \leq 3, 4\leq j \leq 6}, &  1 \leq p \leq 3
    \end{cases}
  \end{equation*}
  Then the $18 \times 18$ matrix of coefficients of the equations (3) of $\{y_{k}\}$ is
  \begin{equation*}
    C\doteq \left(
    \begin{array}{cccccc}
      A_1 & B_1 & 0   & 0   & 0   & 0   \\
      0   & 0   & A_2 & B_2 & 0   & 0   \\
      0   & 0   & 0   & 0   & A_3 & B_3 \\
      A_2 & B_2 & A_1 & B_1 & 0   & 0   \\
      0   & 0   & A_3 & B_3 & A_2 & B_2 \\
      A_3 & B_3 & 0   & 0   & A_1 & B_1
    \end{array}
    \right).
  \end{equation*}
  So $\delta$ is injective if and only if $\mathrm{det}(C)\neq 0$. Hence, in order to find an Ulrich bundle of rank 2 for $(\P^2, \O(3))$, it's enough to find 6 matrixes $\{A_i,B_i\} \subseteq M_{3\times 3}(k)$ such that $\mathrm{det}(C)\neq 0$. For example, if we assume that
  \begin{equation*}
    \alpha=\left(
    \begin{array}{cccccc}
      x_1 & x_2 & x_2+x_3 & x_2+x_3 & x_3 & x_1+x_2 \\
      x_3 & x_1+x_3 & x_2 & x_1+x_2+x_3 & 0 & x_2 \\
      x_1 & x_2+x_3 & x_1+x_2+x_3 & x_2+x_3 & x_3 & x_3
    \end{array}
    \right),
  \end{equation*}
  then $\mathrm{det}(C)=4\neq 0$. Let $\G$ be the cekernel of $\alpha$, we have that $\G(3)$ is an Ulrich bundle of rank $3$ for $(\P^2, \O(3))$.
\end{eg}

In \cite{Coskun-Genc-16}, Coskun and Genc conjectured that for all odd integer $d$ there exists a rank 3 Ulrich bundle for $(\P^2,\O(d))$. With the aid of computer technology, people know that it is correct when $d$ is very small, but no concrete example is given. Here, we give the first example for the case $d=3$.

\bigskip
\noindent \textbf{Acknowledgement.}\;\;I am heavily indebted to my supervisor Professor Jinxing Cai for his patient guidance and helping me out of the difficulties during the hard time of this paper. I also thank Zhaonan Li, Dong Zhang and Xufeng Guo for their assistance of computer technology.

\bigskip

\bigskip

\bigskip

Zhiming Lin


\begin{thebibliography}{99}

\bibitem{Aprodu-Farkas-Ortega-12} M. Aprodu, G. Farkas, A. Ortega: Minimal resolutions, Chow forms and Ulrich bundles on K3 surfaces, arXiv:1212.6248.

\bibitem{Aprodu-Costa-Miro Roig-16} M. Aprodu, L. Costa, R.M. Miro Roig: Ulrich Bundles on Ruled Surfaces, arXiv:1609.08340.

\bibitem{Beauville-15.12} A. Beauville: Ulrich bundles on abelian surfaces. Proc. Amer. Math. Soc. 144 (2016), no. 11, 4609-4611.

\bibitem{Beauville-16.07} A. Beauville: Ulrich bundles on surfaces with $p_{g} = q = 0$, arXiv:1607.00895.

\bibitem{Beauville-16.10} A. Beauville: An Introduction to Ulrich Bundles, arXiv:1610.02771.

\bibitem{Casnati-16.09} G. Casnati: Special Ulrich bundles on non-special surfaces with $p_{g}=q=0$, arXiv:1609.07915.

\bibitem{Coskun-Genc-16} E. Coskun, O. Genc: Ulrich bundles on Veronese Surfaces, arXiv:1609.07130.

\bibitem{Eisenbud-Schreyer-Weyman-03} D. Eisenbud, F.O. Schreyer, J. Weyman: Resultants and Chow forms via exterior syzigies, Amer. Math. Soc. 16 (2003), 537-579.

\bibitem{Herzog-Ulrich-Backelin-91} J. Herzog, B. Ulrich, J. Backelin: Linear maximal Cohen-Macaulay modules over strict complete intersections. J. Pure Appl. Algebra 71 (1991), no. 2-3, 187-202.

\bibitem{Hirzebruch-66}F. Hirzebruch: Topological methods in algebraic geometry, Springer, 1966.

\bibitem{Okonek-Schneider-Spindler-88} C. Okonek, M. Schneider, H. Spindler: Vector Bundles on Complex Projective Spaces, Birkhauser Verlag, 1988.

\bibitem{Ulrich-84} B. Ulrich: Gorenstein rings and modules with high numbers of generators. Math. Z. 188 (1984), no. 1, 23–32.

\end{thebibliography}
\end{document}